\newtheorem{theorem}{Theorem}
\newtheorem{Thm}{Theorem}
\newtheorem{Rem2}[theorem]{Remark}
\newtheorem{Lem}[theorem]{Lemma}
\title{
{\Large
Explicit {\em a posteriori} local error estimation for FEM solutions \\
Hypercircle法による有限要素解の局所事後誤差評価について}
}
\author{中野 泰河(Taiga Nakano)\footnote{email: f18a055k@mail.cc.niigata-u.ac.jp} \and {劉 雪峰(Xuefeng Liu) \footnote{email: xfliu.math@gmail.com (corresponding author)}}}
\date{\normalsize 新潟大学大学院自然科学研究科\\(Graduate School of Science and Technology, Niigata University) 
\\ \quad \\ May 23, 2019} 
\begin{document}
\maketitle

\begin{abstract}
In this paper, based on the Hypercircle method a local {\em a posteriori} error estimation for the finite element solution of Poisson's equation is proposed.
This method can also be effective for Poisson's equation which solution does not have $H^2$ regularity.
Numerical results for a square domain and an L-shaped domain are presented. \\

本論文ではHypercircle法を用いてPoisson方程式の有限要素解に対する局所事後誤差評価の手法を提案する．
当該手法の特徴として，解の$H^2$正則性によらずに関心のある領域における有限要素解の定量的な局所誤差評価を得ることができる．
また、2次元および3次元領域で定義されるPoisson方程式に対して数値実験を行って，提案手法の有効性を示した．
\end{abstract}

\section{はじめに}
  本研究では，Poisson方程式の境界値問題に対して，Hypercircle法による有限要素解の局所事後誤差評価手法を検討する.
  研究の背景として，半導体の抵抗率測定法を支配する偏微分方程式について，近似解の局所誤差評価が要求されることが挙げられる．
  具体的に，抵抗率測定に使用される4探針法(Fig.\ref{fig:four-probe}参照)では，測定試料の表面に4本の探針$A,B,C,D$を接触させ，探針$A,D$間に固定電流を流し，探針$B,C$間の電位差を測定することで抵抗率を測定する\cite{yamashita}．
  このとき，数値手法によって探針周囲での電位（Poisson方程式の境界値問題の解に対応する）の近似値を求め，近似値の局所誤差を評価することが要求される．
  \begin{figure}[h!]
   \begin{center}
   \includegraphics[width=2.8cm]{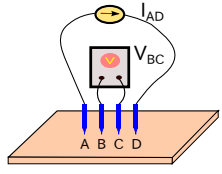} 
   \caption{Four probe method}
   \label{fig:four-probe}
   \end{center}
  \end{figure}

 有限要素解の大域的な事後誤差評価については既に多くの研究結果があり，特に近年，Hypercircle法を用いた有限要素解の誤差の具体的な値または誤差の上界評価(定量的な誤差評価)を提供する手法が報告される(\cite{liu-oishi,kikuchi3,qin-liu,Neittaanmaki,Braess}など)．
 特に，劉-大石の論文\cite{liu-oishi}では菊地の事後誤差評価\cite{kikuchi3}の拡張として事前誤差評価を提案したことがあり，この評価は本研究の提案手法に重要な役割を果たす．
 有限要素解の局所的な誤差評価については，誤差の収束オーダーなどの定性的な誤差評価(\cite{Demlow,Silvia}など)が考えられたが，定量的な有限要素解の局所誤差評価は少なかった．
 
 この論文では，Hypercircle法を利用することで，有限要素解の局所事後誤差評価における世界初の手法を提案する．提案手法の特徴として，L字型領域などの非凸な領域における方程式の厳密解が$H^2$正則性を持ってない場合でも，関心のある領域におけるシャープな誤差評価が可能である．
 抵抗率測定の場合， 提案手法によって非凸形状の測定試料に対しても，探針周囲での電位の誤差評価が可能となる．
 論文では，2次元領域と３次元領域の上で定義されるPoisson方程式の境界値問題に対して数値実験を行い，提案手法の有効性を検討した．
 
\section{準備}
\subsection{問題設定}
  この論文を通じて，特別な説明がない限り$\Omega$を$\mathbb{R}^2$の有界多角形領域とし，Sobolev空間$H^m(\Omega)$ $(m>0)$に関する標準的な記号\cite{miyashima}を用いる．
  $L^2(\Omega)$空間のノルムを$\| \cdot \|_{L^2(\Omega)}$または$\| \cdot \|_{\Omega}$とかき，
  記号$|\cdot|_{H^m(\Omega)},\| \cdot  \|_{H^m(\Omega)}$は，それぞれSobolev空間$H^m(\Omega)$のセミノルム，ノルムを表す．
  また記号$(\cdot,\cdot)$は$L^2(\Omega)$または$(L^2(\Omega))^2$の内積を表す．
  Sobolev空間$W^{1,\infty}(\Omega)$は，1階までの弱導関数が$\Omega$上で本質的に有界であるような関数空間である．
  ベクトル値関数の空間$H(\mbox{div};\Omega)$を以下で定義する．
  $$H(\mbox{div};\Omega):= \{ \mathbf{q} \in (L^2(\Omega))^2 ;~  \mbox{div } \mathbf{q} \in L^2(\Omega) \}$$

  次に以下のモデル問題を考える．
  \begin{eqnarray}
   -\Delta u = f  \mbox{ in } \Omega ,\quad  \displaystyle \frac{\partial u }{\partial \mathbf{n}} = g_N \mbox{ on } \Gamma_N,\quad u= g_D \mbox{ on } \Gamma_D
   \label{eq:model}
  \end{eqnarray}
  ここで$\Gamma_N,\Gamma_D$は，それぞれ互いに素な$\Omega$の境界$\partial \Omega$の部分集合であり，$\Gamma_N \cup \Gamma_D = \partial \Omega$を満たす．
  また$\frac{\partial }{\partial \mathbf{n}}$は$\partial \Omega$上の外向きの単位法線方向微分を表す．上の問題に対する弱形式は以下の式で与えられる．
  \begin{eqnarray}
   \mbox{Find } u \in V \mbox{ s.t. } (\nabla u,\nabla v) = (f,v) + (g_N,v)_{\Gamma_N} \quad \forall v \in V_0 \label{eq:model_weak}
  \end{eqnarray}
  このとき，試行関数の関数空間$V$と試験関数の関数空間$V_0$は，以下で定義される．
  \begin{equation}
  V := \{v \in H^1(\Omega); v =g_D \mbox{ on } \Gamma_D \},~V_0 := \{v \in H^1(\Omega); v =0 \mbox{ on } \Gamma_D \} 
  \end{equation}
  $\Gamma_D$が空集合となる場合，$V_0$の定義は以下のように変わる．
  \begin{equation}
  \label{eq:v0_new}
  V_0 := \{v \in H^1(\Omega); \int_\Omega v dx =0  \}
  \end{equation}
  また，
  $$(g_N,v)_{\Gamma_N} = \int_{\Gamma_N} g_N v ~ds .$$
  
  関数空間$V_0$に関して以下のPoincar\'e不等式が成り立つ．
   \begin{equation}
   \| \phi \|_{\Omega} \leq C_p \| \nabla \phi \|_{\Omega} \quad \forall v \in V_0 \label{eq:poincare_constant}
 \end{equation}
  
     \begin{Rem2}
    式(\ref{eq:poincare_constant})を満たす最適な定数$C_p$は固有値問題，
   $$\mbox{Find } (\lambda,\phi) \in \mathbb{R} \times V_0 \mbox{ s.t. } (\nabla \phi,\nabla v) = \lambda (\phi,v) \quad \forall v \in V_0$$
   の最小固有値$\lambda_1$に対して$C_p = 1/\sqrt{\lambda_1}$で与えられ，特に$\Omega$が単位正方形$(0,1)^2$かつ$\Gamma_D = \partial \Omega$の場合は$C_p=1/(\sqrt{2} \pi)$となる．
  一般的な領域と境界条件の設定の場合， $\lambda_1$の厳密な上下界は有限要素法によって得られる(\cite{VCB_Sec8,liu-oishi,liu-AMC}など)．
   文献\cite{liu-AMC}によると，Crouzeix-Raviart有限要素空間で定義される離散化固有値問題,
   $$\mbox{Find } (\lambda_h,\phi_h) \in \mathbb{R} \times V_0^h \mbox{ s.t. } (\nabla \phi_h ,\nabla v_h) = \lambda (\phi_h,v_h) \quad \forall v_h \in V_0^h$$
   の最小固有値$\lambda_{h,1}$を用いて，次の$\lambda_1$の評価式が成り立つ．
    $$ \lambda_1 \geq \frac{\lambda_{h,1}}{1+(0.1893h)^2\lambda_{h,1}} $$
  \end{Rem2}
  
  \subsection{有限要素法による近似解の構成}
  以降では議論を簡明にするためにモデル問題の境界条件に現れる$g_D,g_N$を,それぞれ境界上で区分的な1次関数，定数関数とする．
  一般の$g_D,g_N$の場合には，それぞれの関数近似の誤差評価が必要となる．
  
  領域$\Omega$の正則な三角形分割によって得られる三角形要素の族を$\mathcal{T}_h$とする．
  三角形要素$K \in \mathcal{T}_h$に対して$K$の最長辺の長さを$h_K$で表し，メッシュサイズ$h$を以下で定義する．
  $$h := \max_{K \in \mathcal{T}_h} h_K $$
  ここで関数空間$V,V_0$に対応する有限要素空間$V^h,V_0^h(\subset H^1(\Omega))$を以下で定義する．
  \begin{eqnarray*}
   V^h  &:=& \{v_h: \mbox{全領域$\Omega$で連続かつ各三角形要素上で区分的な1次関数}  \} \cap V \\
   V_0^h &:=& \{v_h: \mbox{全領域$\Omega$で連続かつ各三角形要素上で区分的な1次関数} \} \cap V_0 
  \end{eqnarray*}
  このとき(\ref{eq:model_weak})の適合有限要素法による定式化は以下で与えられる．
   \begin{eqnarray}
   \mbox{Find } u_h \in V^h \mbox{ s.t. } (\nabla u_h,\nabla v_h) = (f,v_h) + (g_N,v_h)_{\Gamma_N} \quad \forall v_h \in V_0^h \label{eq:CF}
  \end{eqnarray}
 さらに，以下の有限要素空間を準備する．
  \begin{itemize}
   \item[(a)] 区分定数関数空間$X^h$:
    $$X^h := \{v \in L^2(\Omega):v\mbox{は各三角形要素上で定数} \} $$
    $\Gamma_D$が空集合となる場合，$X^h$を以下のように定義する．
    $$X^h := \{v \in L^2(\Omega):v\mbox{は各三角形要素上で定数}, \int_\Omega v dx =0 ~ \} $$
   \item[(b)] $0$次Raviart-Thomas有限要素空間$W^h$:
    $$W^h := \{\mathbf{p}_h \in H(\mbox{div};\Omega):\mathbf{p}_h =(a_K+c_Kx,b_K+c_Ky) \mbox{ in } K \in \mathcal{T}_h \} $$
    ここで$a_K,b_K,c_K$は各三角形要素$K$上の定数である．
  \end{itemize}
  
  上で述べた有限要素空間によって，問題(\ref{eq:model})の混合有限要素法による定式化は以下で与えられる．
%
\begin{eqnarray}
 &&\quad\quad   \mbox{Find }  (\mathbf{p}_h,\mu_h) \in W^h \times X^h,~ \mathbf{p}_h \cdot n = g_N \mbox{ on } \Gamma_N \mbox{ s.t. }  \label{eq:MF}  \\ 
 &&\quad \quad  (\mathbf{p}_h,\mathbf{q}_h)  +  (\mbox{div } \mathbf{q}_h,\mu_h) + (\mbox{div } \mathbf{p}_h,\eta_h) = -(f,\eta_h)+ (g_D , (\mathbf{q}_h \cdot n) )_{\Gamma_D} ~~  \forall (\mathbf{q}_h ,\eta_h) \in  W^h \times    X^h  \nonumber
 \end{eqnarray}
 
 ここで，後述の誤差解析に使用される射影作用素$\pi_{h}:L^2(\Omega) \to X^h$を定義する．
 任意の$f \in L^2(\Omega)$に対して，$\pi_{h} f \in X^h$は
  \begin{equation}
   (f-\pi_{h} f,\eta_h) =0 \quad \forall \eta_h \in X^h
  \end{equation}
を満たし，さらに以下の誤差評価が成り立つ．
  \begin{equation}
   \| f- \pi_h f\|_{\Omega} \leq C_0h|f|_{H^1(\Omega)} \quad \forall f \in H^1(\Omega) \label{eq:inter_err}
  \end{equation}  
 ここで三角形分割に依存する定数$C_0$は$C_0:=\max_{K \in \mathcal{T}_h} C_0(K)/h$で定義され，明らかな上界を持つ．
 先行研究\cite{liu-kikuchi1,liu-kikuchi2,Laugesen}において，最適な$C_0(K)$の値は，Bessel関数$J_1$の正の最小の根$j_{1,1}\approx3.83171$によって$C_0(K):=h_K/j_{1,1}$と与えられることが報告されている．
   
 \section{有限要素解の大域的な事前誤差評価}
  この節では，以下の斉次混合境界値問題に対する有限要素解の大域的な事前誤差評価\cite{liu-oishi}(または\cite{VCB_Sec8} の8章，\cite{qin-liu}) を紹介する．
    \begin{eqnarray}
   -\Delta \phi = f  \mbox{ in } \Omega ,\quad  \displaystyle \frac{\partial \phi }{\partial \mathbf{n}} = 0 \mbox{ on } \Gamma_N,\quad \phi= 0 \mbox{ on } \Gamma_D
   \label{eq:homo_model}
  \end{eqnarray}
  上の問題に対する弱形式と適合有限要素法による定式化は，以下で与えられる．
  \begin{eqnarray}
   \mbox{Find } \phi \in V_0 &\mbox{ s.t. }& (\nabla \phi,\nabla v) = (f,v)  \quad \forall v \in V_0 \label{eq:homo_model_weak}\\
   \mbox{Find } \phi_h \in V_0^h &\mbox{ s.t. }& (\nabla \phi_h,\nabla v_h) = (f,v_h)  \quad \forall v_h \in V_0^h \label{eq:homo_CF}
  \end{eqnarray}
  ここで，Galerkin射影作用素$P_h:V_0 \to V_0^h$を紹介する．
 任意の$v \in V_0$に対して，$P_h v$は以下の式を満たす．
  \begin{equation}
   (\nabla(v-P_h v),\nabla v_h) =0 \quad \forall v_h \in V_0^h \label{eq:galerkin}
  \end{equation}
  よって，$P_h \phi =\phi_h$となる．また，安定性に関する以下の評価式が成り立つ．
   \begin{equation}
   \| \nabla P_h \phi \|_{\Omega} \leq \| \nabla \phi \|_{\Omega} \leq C_p \| f \|_{\Omega} \label{eq:stablity-galerkin}
  \end{equation}
 ここで，$C_p$は式(\ref{eq:poincare_constant})に現れるPoincar\'e定数である．
  また，問題(\ref{eq:homo_model})の混合有限要素法による定式化は以下で与えられる．
  \begin{eqnarray}
 &&\quad\quad   \mbox{Find }  (\tilde{\mathbf{p}}_h,\sigma_h) \in W^h \times X^h,~ \tilde{\mathbf{p}}_h \cdot n = 0 \mbox{ on } \Gamma_N \mbox{ s.t. }  \label{eq:homo_MF}  \\ 
 &&\quad \quad  (\tilde{\mathbf{p}}_h,\mathbf{q}_h)  +  (\mbox{div } \mathbf{q}_h,\sigma_h) + (\mbox{div } \tilde{\mathbf{p}}_h,\tau_h) = -(f,\tau_h) \quad \forall (\mathbf{q}_h ,\tau_h) \in  W^h \times    X^h  \nonumber
 \end{eqnarray}
  
 有限要素解の大域的な事前誤差評価に重要な役割を果たす定数$\kappa_h$を以下で定義する．
 $$ \kappa_h := \max_{f_h \in X^h} ~~ \min_{ \substack{v_h \in V_0^h ,~ \tilde{\mathbf{p}}_h \in W^h, \\  \tilde{\mathbf{p}}_h \cdot n =0, ~ \text{div } \tilde{\mathbf{p}}_h + f_h =0} }  \frac{\| \nabla v_h -\tilde{\mathbf{p}}_h\|}{\|f_h\|} $$
 これらの定数$C_0,\kappa_h$を利用して，有限要素解の大域的な事前誤差評価を得ることができる．

\begin{Thm}[Prager-Syngeの定理\cite{synge}]
 関数$\phi$を式(\ref{eq:homo_model_weak})の厳密解とし，ベクトル値関数$\tilde{\mathbf{p}} \in H(\mbox{div};\Omega)$は以下を満たす．
 $$ \mbox{div }\tilde{\mathbf{p}}+f=0 ,~ \tilde{\mathbf{p}} \cdot n = 0 \mbox{ on }  \Gamma_N $$
 このとき，任意の$v \in V_0$に対して，Prager-Syngeの定理\cite{synge}に現れる以下のHypercircle式が成り立つ．
 \begin{equation} \|\nabla \phi - \nabla v \|_{\Omega}^2 + \| \nabla \phi- \tilde{\mathbf{p}}\|_{\Omega}^2 = \|\nabla v - \tilde{\mathbf{p}} \|_{\Omega}^2 \label{eq:Hypercircle} \end{equation}
 \label{Thm:PS}
 \end{Thm}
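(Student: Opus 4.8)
The plan is to reduce the identity to an $L^2$-orthogonality relation. Writing $a := \nabla \phi - \nabla v$ and $b := \nabla \phi - \tilde{\mathbf{p}}$, one has $a - b = \tilde{\mathbf{p}} - \nabla v$, so the right-hand side of (\ref{eq:Hypercircle}) equals $\|a - b\|_{\Omega}^2 = \|a\|_{\Omega}^2 - 2(a,b) + \|b\|_{\Omega}^2$. Hence (\ref{eq:Hypercircle}) is equivalent to the single orthogonality statement $(a,b) = 0$, i.e.\ that the conforming error $\nabla \phi - \nabla v$ and the equilibrated-flux error $\nabla \phi - \tilde{\mathbf{p}}$ are orthogonal in $(L^2(\Omega))^2$. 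Everything therefore comes down to evaluating the cross term $(\nabla \phi - \nabla v,\ \nabla \phi - \tilde{\mathbf{p}})$.

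First I would set $w := \phi - v$. Since $\phi$ solves (\ref{eq:homo_model_weak}) it lies in $V_0$, and $v \in V_0$ by hypothesis, so $w \in V_0$; in particular $w = 0$ on $\Gamma_D$. Then $(a,b) = (\nabla w,\ \nabla \phi - \tilde{\mathbf{p}}) = (\nabla w, \nabla \phi) - (\nabla w, \tilde{\mathbf{p}})$. The first term is handled by testing the weak form (\ref{eq:homo_model_weak}) with $w \in V_0$, which gives $(\nabla w, \nabla \phi) = (f, w)$.

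The remaining term $(\nabla w, \tilde{\mathbf{p}})$ is treated by Green's formula for $\tilde{\mathbf{p}} \in H(\mbox{div};\Omega)$ and $w \in H^1(\Omega)$:
$$(\nabla w, \tilde{\mathbf{p}}) = -(\mbox{div }\tilde{\mathbf{p}}, w) + \int_{\partial\Omega} w\,(\tilde{\mathbf{p}}\cdot n)\,ds.$$
Using $\mbox{div }\tilde{\mathbf{p}} = -f$ turns the volume term into $(f, w)$, while the boundary integral splits over $\partial\Omega = \Gamma_N \cup \Gamma_D$: on $\Gamma_N$ it vanishes because $\tilde{\mathbf{p}}\cdot n = 0$, and on $\Gamma_D$ it vanishes because $w = 0$ there. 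Thus $(\nabla w, \tilde{\mathbf{p}}) = (f,w)$, and combining with the previous step yields $(a,b) = (f,w) - (f,w) = 0$, which is exactly the orthogonality needed. I expect the main technical obstacle to be justifying Green's formula at this level of regularity — the normal trace $\tilde{\mathbf{p}}\cdot n$ lives only in $H^{-1/2}(\partial\Omega)$, so the boundary pairing must be read in the appropriate duality — together with confirming that the boundary term genuinely splits and vanishes piece by piece, including the pure-Neumann case $\Gamma_D = \emptyset$ of (\ref{eq:v0_new}), where the entire boundary integral is killed by $\tilde{\mathbf{p}}\cdot n = 0$ on all of $\partial\Omega$.
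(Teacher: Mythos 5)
Your proof is correct. The paper states Theorem~\ref{Thm:PS} without proof (citing \cite{synge}), but your argument --- expanding $\|\nabla v-\tilde{\mathbf{p}}\|_{\Omega}^2$ and showing the cross term vanishes by testing the weak form with $w=\phi-v$ and applying Green's formula together with $\mbox{div }\tilde{\mathbf{p}}=-f$, $\tilde{\mathbf{p}}\cdot n=0$ on $\Gamma_N$ and $w=0$ on $\Gamma_D$ --- is exactly the $\alpha\equiv 1$ specialization of the paper's own proof of the weighted Hypercircle estimate (Theorem~\ref{Thm:alpha-PS}), where the same cross term is only estimated rather than annihilated; your handling of the $H^{-1/2}$ normal-trace pairing and of the pure-Neumann case $\Gamma_D=\emptyset$ is also sound.
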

  
  \begin{Lem}[\cite{VCB_Sec8}の定理8.2, \cite{liu-oishi}の定理3.2]
 与えられた$f \in L^2(\Omega)$に対して，関数$\overline{\phi} \in V_0,~\overline{\phi}_h \in V_0^h$をそれぞれ以下の変分問題の解とする．
  \begin{eqnarray} 
  (\nabla \overline{\phi},\nabla v ) &= (\pi_h f,v)   \quad \forall v \in V_0 \label{eq:homo_aux}\\
  (\nabla \overline{\phi}_h,\nabla v_h ) &= (\pi_h f,v_h)   \quad \forall v_h \in V_0^h \label{eq:homo_aux_CF}
  \end{eqnarray}
  このとき，$\overline{\phi}_h$は$\overline{\phi}$の近似解であり，以下の誤差評価が得られる.
  \begin{equation} |\overline{\phi}-\overline{\phi}_h|_{H^1(\Omega)} \leq \kappa_h \|\pi_h f\|_{\Omega} \end{equation}
  \label{Thm:homo_aux}
 \end{Lem}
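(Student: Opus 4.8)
The plan is to combine the Prager--Synge identity (Theorem~\ref{Thm:PS}) with the defining optimality of the Galerkin solution $\overline{\phi}_h$, so that the abstract constant $\kappa_h$ emerges naturally as the upper bound. Since $\overline{\phi}$ is the exact solution of the homogeneous problem with right-hand side $\pi_h f$, any flux $\tilde{\mathbf{p}}_h \in W^h$ obeying the constraints $\mbox{div }\tilde{\mathbf{p}}_h + \pi_h f = 0$ and $\tilde{\mathbf{p}}_h \cdot n = 0$ on $\Gamma_N$ is admissible in Theorem~\ref{Thm:PS} (with $f$ replaced by $\pi_h f$). Taking $v = \overline{\phi}_h \in V_0^h \subset V_0$ in the Hypercircle identity~(\ref{eq:Hypercircle}) and discarding the nonnegative term $\|\nabla\overline{\phi} - \tilde{\mathbf{p}}_h\|_\Omega^2$, I obtain $|\overline{\phi} - \overline{\phi}_h|_{H^1(\Omega)} \le \|\nabla\overline{\phi}_h - \tilde{\mathbf{p}}_h\|_\Omega$ for every such $\tilde{\mathbf{p}}_h$.

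The heart of the argument is to recognize that, for each admissible $\tilde{\mathbf{p}}_h$, the function $\overline{\phi}_h$ minimizes $\|\nabla v_h - \tilde{\mathbf{p}}_h\|_\Omega$ over $v_h \in V_0^h$. First I would establish the orthogonality $(\nabla w_h, \tilde{\mathbf{p}}_h - \nabla\overline{\phi}_h) = 0$ for all $w_h \in V_0^h$. This follows from an element-wise Green's formula: because $\tilde{\mathbf{p}}_h \in H(\mbox{div};\Omega)$ has continuous normal traces across edges and $w_h$ is continuous and vanishes on $\Gamma_D$, while $\tilde{\mathbf{p}}_h\cdot n = 0$ on $\Gamma_N$, all boundary contributions cancel and $(\nabla w_h, \tilde{\mathbf{p}}_h) = -(w_h, \mbox{div }\tilde{\mathbf{p}}_h) = (w_h, \pi_h f)$; the defining relation~(\ref{eq:homo_aux_CF}) of $\overline{\phi}_h$ then rewrites the right-hand side as $(\nabla\overline{\phi}_h, \nabla w_h)$. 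With this orthogonality, splitting $\nabla v_h - \tilde{\mathbf{p}}_h = \nabla(v_h - \overline{\phi}_h) + (\nabla\overline{\phi}_h - \tilde{\mathbf{p}}_h)$ and applying Pythagoras gives $\|\nabla v_h - \tilde{\mathbf{p}}_h\|_\Omega^2 = \|\nabla(v_h - \overline{\phi}_h)\|_\Omega^2 + \|\nabla\overline{\phi}_h - \tilde{\mathbf{p}}_h\|_\Omega^2$, whose minimum over $v_h$ is attained at $v_h = \overline{\phi}_h$.

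Combining the two steps, $\min_{v_h,\tilde{\mathbf{p}}_h}\|\nabla v_h - \tilde{\mathbf{p}}_h\|_\Omega = \min_{\tilde{\mathbf{p}}_h}\|\nabla\overline{\phi}_h - \tilde{\mathbf{p}}_h\|_\Omega$, where the minima range over the admissible $\tilde{\mathbf{p}}_h$ (a nonempty set, since the mixed solution of~(\ref{eq:homo_MF}) is feasible, its divergence equalling $-\pi_h f$). Finally I would substitute the particular choice $f_h = \pi_h f \in X^h$ into the definition of $\kappa_h$: the inner minimum divided by $\|\pi_h f\|_\Omega$ is bounded above by the outer maximum $\kappa_h$, so $\min_{v_h,\tilde{\mathbf{p}}_h}\|\nabla v_h - \tilde{\mathbf{p}}_h\|_\Omega \le \kappa_h\|\pi_h f\|_\Omega$ (the case $\pi_h f = 0$ being trivial, as then $\overline{\phi} = \overline{\phi}_h = 0$). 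Chaining $|\overline{\phi} - \overline{\phi}_h|_{H^1(\Omega)} \le \min_{\tilde{\mathbf{p}}_h}\|\nabla\overline{\phi}_h - \tilde{\mathbf{p}}_h\|_\Omega \le \kappa_h\|\pi_h f\|_\Omega$ yields the claim. The step I expect to require the most care is the orthogonality identity: one must ensure that $\tilde{\mathbf{p}}_h$ lies in $H(\mbox{div};\Omega)$ globally, so the inter-element normal-trace terms cancel, and that the conditions on $\Gamma_D$ and $\Gamma_N$ exactly eliminate the boundary integral; the rest is Pythagoras and the definition of $\kappa_h$.
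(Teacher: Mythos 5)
The paper does not actually prove this lemma --- it is quoted from \cite{VCB_Sec8} and \cite{liu-oishi} without proof --- so there is no in-paper argument to compare against. Your proposal is a correct and complete proof, and every step checks out: the Prager--Synge identity applied to the problem with datum $\pi_h f$ and $v=\overline{\phi}_h$ gives $|\overline{\phi}-\overline{\phi}_h|_{H^1(\Omega)}\le\|\nabla\overline{\phi}_h-\tilde{\mathbf{p}}_h\|_\Omega$ for every admissible flux; your orthogonality identity $(\nabla w_h,\tilde{\mathbf{p}}_h-\nabla\overline{\phi}_h)=0$ is valid (the global Green formula suffices since $\tilde{\mathbf{p}}_h\in H(\mbox{div};\Omega)$, and in the pure-Neumann case the boundary term still vanishes because $\tilde{\mathbf{p}}_h\cdot n=0$ on all of $\partial\Omega$); and the feasibility of the mixed solution together with the choice $f_h=\pi_h f$ in the definition of $\kappa_h$ closes the argument. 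The one remark worth making is that your middle step is more work than necessary: instead of proving that $\overline{\phi}_h$ minimizes $\|\nabla v_h-\tilde{\mathbf{p}}_h\|_\Omega$ over $v_h\in V_0^h$, you can invoke the best-approximation property of the Galerkin projection, $|\overline{\phi}-\overline{\phi}_h|_{H^1(\Omega)}\le|\overline{\phi}-v_h|_{H^1(\Omega)}$ for all $v_h\in V_0^h$, and then apply the hypercircle with an arbitrary $v_h$ to get $|\overline{\phi}-\overline{\phi}_h|_{H^1(\Omega)}\le\|\nabla v_h-\tilde{\mathbf{p}}_h\|_\Omega$ for every admissible pair; taking the minimum over pairs immediately matches the definition of $\kappa_h$. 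This is the route taken in the cited references, but what you wrote buys the same conclusion and is additionally a self-contained verification that the inner minimum in $\kappa_h$ is attained at the conforming FEM solution, which is not wasted insight.
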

 \begin{Rem2}
  $f$の近似$f_h$に対して$\mbox{div } \tilde{\mathbf{p}}_h +f_h=0$を厳密に満たす$\tilde{\mathbf{p}}_h$を構成することが，提案手法の特徴の一つである．
  文献\cite{Thomas}は与えられる$f$に対して，要素の境界で$f$に対応する$\tilde{\mathbf{p}}$を構成している．
 \end{Rem2}
 
 \begin{Thm}[有限要素解の大域的な事前誤差評価\cite{liu-oishi}]
  与えられた$f \in L^2(\Omega)$に対して，$u$と$u_h$をそれぞれ変分問題(\ref{eq:homo_model_weak})と(\ref{eq:homo_CF})の解とする．
  このとき，以下の誤差評価が得られる．
  \begin{eqnarray} 
   && |\phi - \phi_h|_{H^1(\Omega)}   \leq   C(h) \|f\|_{\Omega} \label{eq:grobal-H1} \\
   && \|\phi - \phi_h \|_{\Omega}  \leq  C(h) |\phi - \phi_h|_{H^1(\Omega)} \label{eq:grobal-L2}
  \end{eqnarray}
  ここで，$C(h):=\sqrt{\kappa_h^2+(C_0h)^2}$である．
  \label{Thm:global_est}
 \end{Thm}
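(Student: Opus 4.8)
The plan is to establish the $H^1$ estimate (\ref{eq:grobal-H1}) first and then deduce the $L^2$ estimate (\ref{eq:grobal-L2}) by an Aubin--Nitsche duality argument. For (\ref{eq:grobal-H1}) I would introduce the auxiliary solutions $\overline{\phi}\in V_0$ and $\overline{\phi}_h\in V_0^h$ of (\ref{eq:homo_aux}) and (\ref{eq:homo_aux_CF}), i.e. the exact and discrete solutions driven by the projected data $\pi_h f$. Since $\phi_h=P_h\phi$ and, by the same orthogonality argument applied to (\ref{eq:homo_aux_CF}), $\overline{\phi}_h=P_h\overline{\phi}$, writing $\eta:=\phi-\overline{\phi}$ yields the splitting
\begin{equation}
\phi-\phi_h=(I-P_h)\eta+(\overline{\phi}-\overline{\phi}_h).
\end{equation}
By the triangle inequality in the $H^1$ seminorm it then suffices to bound the two pieces separately, and the second piece is controlled directly by Lemma \ref{Thm:homo_aux}, giving $|\overline{\phi}-\overline{\phi}_h|_{H^1(\Omega)}\le\kappa_h\|\pi_h f\|_{\Omega}$.

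For the first piece I would use that $\eta$ satisfies $(\nabla\eta,\nabla v)=(f-\pi_h f,v)$ for all $v\in V_0$. Combining the Galerkin orthogonality (\ref{eq:galerkin}) for $(I-P_h)\eta$ with this variational identity tested against $(I-P_h)\eta\in V_0$ gives
\begin{equation}
|(I-P_h)\eta|_{H^1(\Omega)}^2=(\nabla\eta,\nabla(I-P_h)\eta)=(f-\pi_h f,(I-P_h)\eta).
\end{equation}
The crucial step is to exploit $(f-\pi_h f,\chi)=0$ for every $\chi\in X^h$: inserting $\chi=\pi_h[(I-P_h)\eta]$ replaces $(I-P_h)\eta$ by $(I-\pi_h)(I-P_h)\eta$, after which the interpolation estimate (\ref{eq:inter_err}) bounds $\|(I-\pi_h)(I-P_h)\eta\|_{\Omega}\le C_0h\,|(I-P_h)\eta|_{H^1(\Omega)}$. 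A Cauchy--Schwarz step followed by cancellation of one factor of $|(I-P_h)\eta|_{H^1(\Omega)}$ then delivers $|(I-P_h)\eta|_{H^1(\Omega)}\le C_0h\,\|f-\pi_h f\|_{\Omega}$.

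To finish (\ref{eq:grobal-H1}) I would combine the two bounds by Cauchy--Schwarz applied to the pairs $(C_0h,\kappa_h)$ and $(\|f-\pi_h f\|_{\Omega},\|\pi_h f\|_{\Omega})$, using the orthogonal decomposition $\|f-\pi_h f\|_{\Omega}^2+\|\pi_h f\|_{\Omega}^2=\|f\|_{\Omega}^2$, to obtain $|\phi-\phi_h|_{H^1(\Omega)}\le\sqrt{\kappa_h^2+(C_0h)^2}\,\|f\|_{\Omega}=C(h)\|f\|_{\Omega}$. For the $L^2$ estimate (\ref{eq:grobal-L2}) I would run a duality argument: setting $e:=\phi-\phi_h$ and letting $w\in V_0$ solve $(\nabla w,\nabla v)=(e,v)$ for all $v\in V_0$, Galerkin orthogonality gives $\|e\|_{\Omega}^2=(\nabla e,\nabla(I-P_h)w)\le|e|_{H^1(\Omega)}\,|(I-P_h)w|_{H^1(\Omega)}$. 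Because $w$ solves a problem of exactly the form (\ref{eq:homo_model_weak}) with $L^2$ data $e$, the estimate (\ref{eq:grobal-H1}) already proved applies to its Galerkin error, so $|(I-P_h)w|_{H^1(\Omega)}\le C(h)\|e\|_{\Omega}$, and dividing by $\|e\|_{\Omega}$ concludes.

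I expect the main obstacle to be the bound $|(I-P_h)\eta|_{H^1(\Omega)}\le C_0h\,\|f-\pi_h f\|_{\Omega}$: a naive estimate would either produce a dual norm of the data oscillation $f-\pi_h f$ or appear to require $H^2$-regularity of $\eta$, neither of which is available in the nonconvex setting that motivates the paper. The clean factor $C_0h$ emerges only once the $L^2$-projection is inserted so as to convert the data oscillation into the interpolation error of $(I-P_h)\eta$ itself, thereby avoiding any regularity assumption on $\eta$. A secondary point to check is that the constant $C(h)$ transfers verbatim to the dual problem, which is legitimate precisely because (\ref{eq:grobal-H1}) holds for arbitrary $f\in L^2(\Omega)$.
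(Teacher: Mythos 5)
Your argument is correct and is essentially the standard proof of this result, which the paper itself only quotes from \cite{liu-oishi} without proof: the splitting $\phi-\phi_h=(I-P_h)(\phi-\overline{\phi})+(\overline{\phi}-\overline{\phi}_h)$ with Lemma \ref{Thm:homo_aux} for the second piece, the insertion of $\pi_h$ to get the factor $C_0h\|f-\pi_hf\|_{\Omega}$ without any $H^2$ regularity, the Cauchy--Schwarz/Pythagoras combination yielding $C(h)\|f\|_{\Omega}$, and the Aubin--Nitsche duality for (\ref{eq:grobal-L2}) all match the ingredients the paper itself uses elsewhere (the same $\pi_h$ trick appears in the proof of Lemma \ref{Lem:aux}, and the same duality argument reappears in the proof of Theorem \ref{Thm:local_est}). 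No gaps.
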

 \begin{Rem2}
方程式の解$u$が$H^2(\Omega)$に属するとき、
誤差評価式(\ref{eq:grobal-H1})と(\ref{eq:grobal-L2})の中の$C(h)$の代わりに、
Lagrange補間関数の誤差定数を使って値を与えることができる．
例えば、計算例\S5.2では、一様直角三角メッシュに対して、$C(h)$の代わりに$C_h=0.493h$を使うことも可能\cite{liu-kikuchi2,liu-oishi}．

 \end{Rem2}
 \begin{Rem2}
  定理\ref{Thm:global_est}において，  以下の事後誤差評価も得られる．
  $$
   |\phi-\phi_h|_{H^1(\Omega)} \leq C_0h \|f-\pi_hf \|_{\Omega} + \|\nabla \phi_h- \tilde{\mathbf{p}}_h \|_{\Omega}
  $$
 \end{Rem2}
 \begin{Rem2}
  定数$\kappa_h$の計算法:~
  与えられた$f_h \in X^h$に対して，変分問題(\ref{eq:homo_aux_CF})の解$\phi_h$を対応させる線形作用素$R_h:X^h \to V^h$と，
  変分問題(\ref{eq:homo_MF})の解$\tilde{\mathbf{p}}_h$を対応させる線形作用素$T_h:X^h \to W^h$を定義し，さらに$Q_h := \nabla \circ R_h - T_h$とする．
  このとき$\kappa_h$は以下のように計算されることが分かる．
  $$ \kappa_h = \max_{f_h \in X^h} \frac{\|Q_h f_h\|_{\Omega} }{\|f_h\|_{\Omega}} $$
  ここで$X^h,V^h,W^h$は有限次元ベクトル空間なので，上の式は行列の固有値問題に帰着される．
  より詳細な説明については先行研究\cite{liu-oishi,qin-liu}を参照する．
 \end{Rem2}
  \begin{Rem2}
  式(\ref{eq:model})に示した一般の混合境界値問題の場合，与えられるデータ$f,g_D,g_N$から解$u$または有限要素解$u_h$への対応はアフィン写像になる．
 \end{Rem2}
 
 \section{重み付きHypercircle式と有限要素解の局所事後誤差評価}
 この節では，Hypercircle式を用いた有限要素解の局所事後誤差評価を提案する．
 関心のある領域$S$は$\Omega$の部分領域として与えられ，$S$に対応する重み関数(またはCutoff関数)$\alpha$が導入される． 
 次に，重み関数$\alpha$を用いて重み付きHypercircle式を導入し，前節の技巧を応用して有限要素解の局所事後誤差評価を与える．
 \subsection{重み関数}
 関心のある領域$S$に対して，$S$を囲う帯状領域を$B_{S}:= \{ x \in \Omega \setminus \overline{S};~ \mbox{dist}(x,\partial S) < \varepsilon \}$とし(Fig.\ref{fig:alpha-graph}-(a),~Fig.\ref{fig:alpha-graph}-(b)参照)，さらに部分領域$\Omega' =S\cup B_{S}$を定義する．
  次に，重み関数$\alpha \in W^{1,\infty}(\Omega)$を定義する．
  重み関数$\alpha$は$\overline{\Omega'}$を台(Support)とする区分的な多項式関数であり，以下の性質を持っている．
   \begin{equation}
     \alpha(x,y)  =
    \left\{ 
    \begin{array}{l}      1 \quad (x,y) \in S \\      0 \quad (x,y) \in (\Omega')^c     \end{array} 
    \right. , \quad  0 \leq \alpha(x,y) \leq 1 \quad \forall (x,y) \in \overline{\Omega}
   \end{equation}
 
  \begin{figure}[h!]
    \begin{center}
    \includegraphics[width=3.5cm]{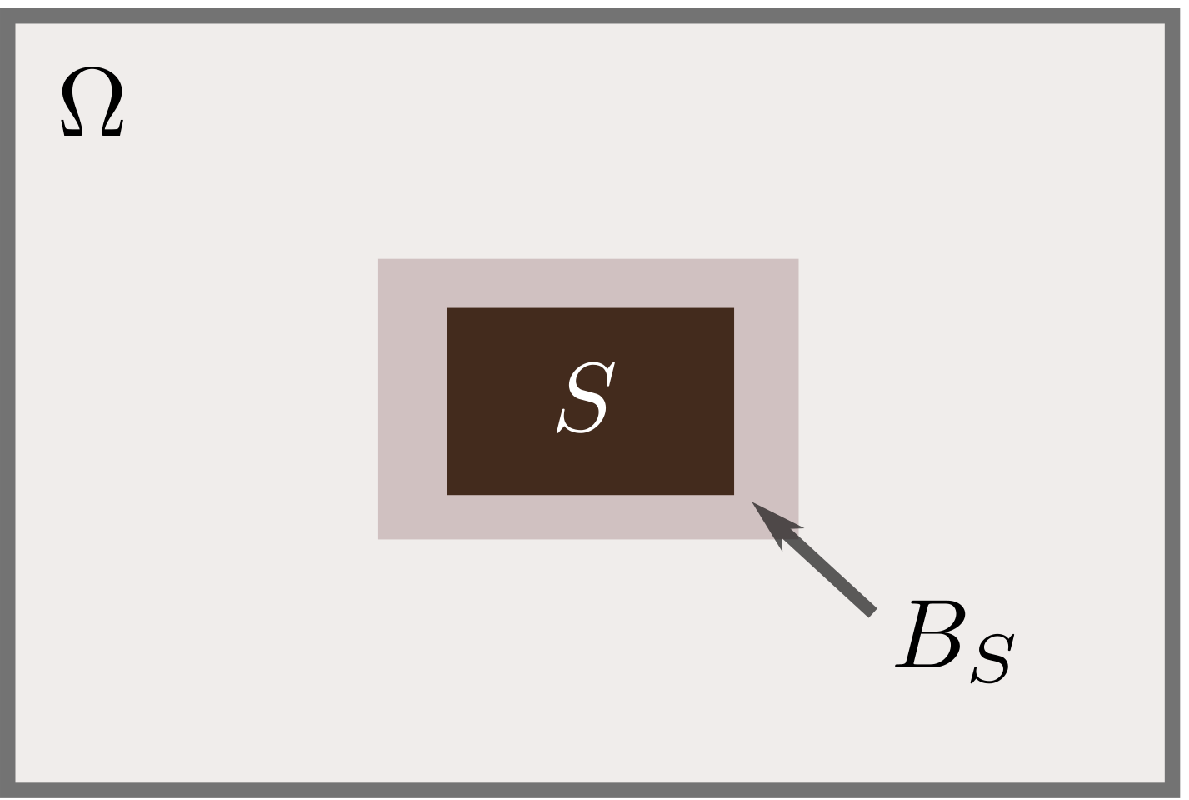} \quad
    \includegraphics[width=3.5cm]{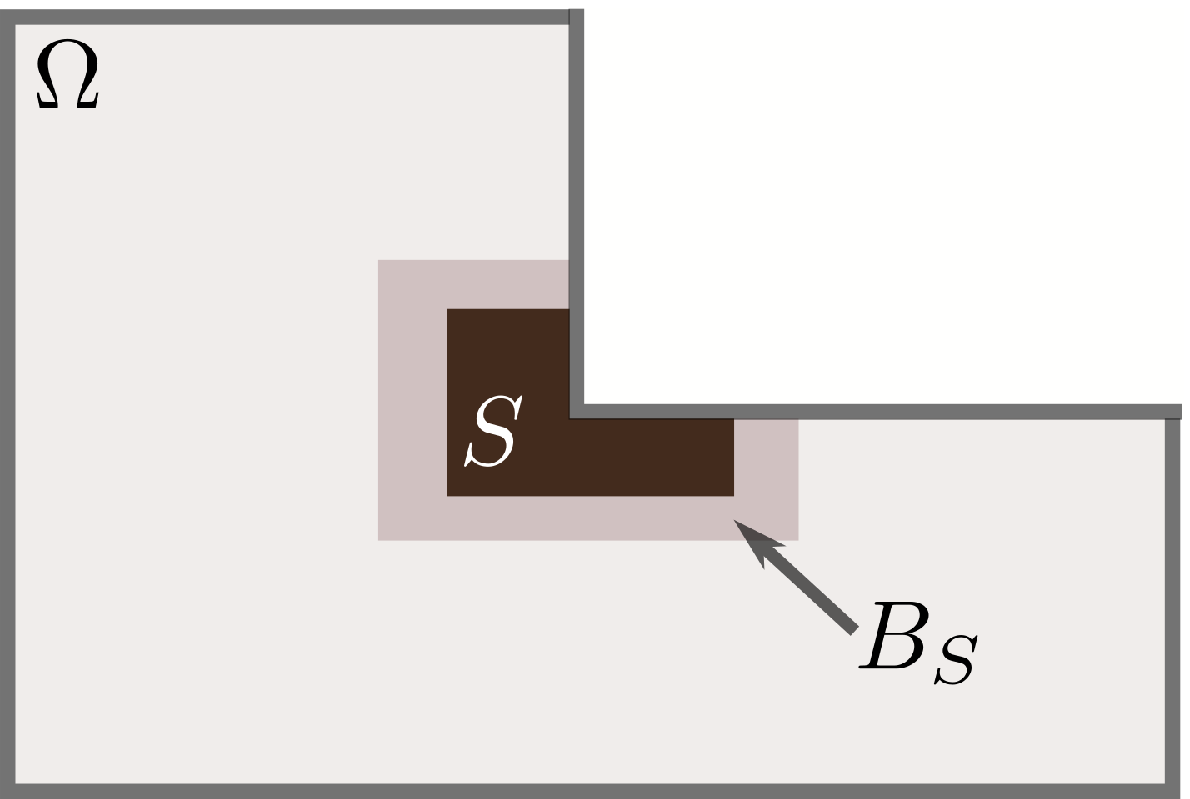} \quad 
    \includegraphics[width=4.2cm]{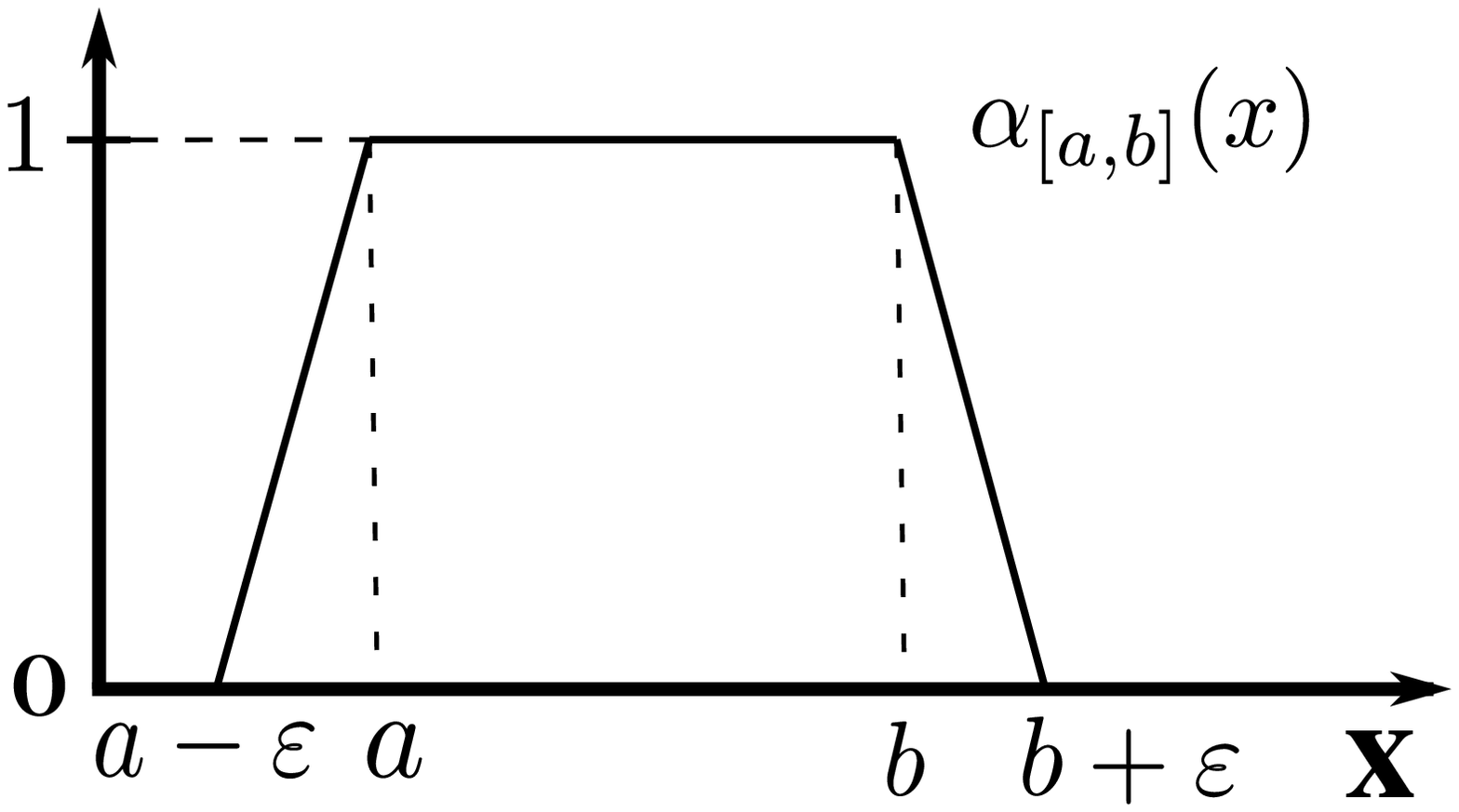}  \\   
    {(a) Square domain} \quad\quad \quad {(b) L-shpaed domain} \quad \quad \quad {(c) Graph of $\alpha_{(a,b)}(x)$}
    \caption{Definition of weight function $\alpha$}
    \label{fig:alpha-graph}
    \end{center}
\end{figure}

 特に，$S$が長方形の場合(Fig.\ref{fig:alpha-graph}-(a)参照)，$S$は2つの1次元開区間の直積によって表されるため，
 重み関数$\alpha$はそれぞれの開区間に対応する重み関数$\alpha_1(x)$と$\alpha_2(y)$の積によって構成できる．
 開区間$(a,b)$に対応する重み関数$\alpha_{(a,b)}$は以下のように構成できる．また，$\alpha_{(a,b)}$のグラフをFig.\ref{fig:alpha-graph}-(c)に示す．
 \begin{eqnarray}
  \alpha_{(a,b)}(x)  =
  \left \{ 
   \begin{array}{ll}
   1+ (x-a)/\varepsilon  \quad & x \in (a-\varepsilon,a]  \\
   1 \quad & x \in(a,b)  \\
   1- (x-b)/\varepsilon  \quad & x \in [b,b+\varepsilon)  \\
   0 \quad & \mbox{otherwise}
  \end{array} 
  \right .
 \end{eqnarray}
 ただし，$\varepsilon$は帯状領域$B_{S}$の幅である．
 
 次に，重み関数$\alpha$によって，重み付き内積$(\cdot,\cdot)_{\alpha}$と重み付きノルム$\| \cdot \|_{\alpha}$を導入する．
 \begin{itemize} 
  \item[(a)] 重み付き内積$(\cdot,\cdot)_{\alpha}$: ~$f,g \in L^2(\Omega)$または$f,g \in (L^2(\Omega))^2$に対して，
  \begin{equation} (f,g)_{\alpha} := \int_{\Omega'}  \alpha f \cdot g ~dx. \end{equation}
  
  \item[(b)] 重み付きノルム$\| \cdot \|_{\alpha}:$ ~$f \in L^2(\Omega)$に対して， 
  \begin{equation} \| f\|_{\alpha} : = \sqrt{(f,f)_{\alpha}}= \sqrt{\int_{\Omega'} f^2 \alpha ~dx} \quad (=\| f\sqrt{\alpha} \|_{\Omega}). \end{equation}
  このとき，以下の不等式が成り立つことは容易に分かる．
 \begin{equation} \| f \|_S \leq \| f \|_{\alpha} \leq \| f \| _{\Omega'} \leq \| f \| _{\Omega} \label{eq:norm_relation} \end{equation}
 \end{itemize}

 \subsection{重み付きHypercircle式}
 この節では定理\ref{Thm:PS}の拡張として，重み付きノルムに関するHypercircle式を与える．
 \begin{Thm}
 関数$u$を式(\ref{eq:model_weak})の厳密解とし，ベクトル値関数$\mathbf{p} \in H(\mbox{div};\Omega)$は以下を満たす．
 $$ \mbox{div }\mathbf{p}+f=0 \mbox{ in } \Omega, \quad  \mathbf{p} \cdot n = g_N \mbox{ on }  \Gamma_N $$
 このとき，任意の$v \in V$に対して，以下の重み付きHypercircle式が成り立つ．
 \begin{equation} \|\nabla u - \nabla v \|_{\alpha}^2 + \| \nabla u- \mathbf{p}\|_{\alpha}^2 \leq \|\nabla v - \mathbf{p} \|_{\alpha}^2  + 2\|\nabla \alpha\|_{L^{\infty}(\Omega')} \|u-v \|_{\Omega'} \|\nabla u - \mathbf{p} \|_{\Omega'} \label{eq:alpha-Hypercircle} \end{equation}
 \label{Thm:alpha-PS}
 \end{Thm}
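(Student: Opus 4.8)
The plan is to reduce the claimed inequality to a bound on a single weighted cross term, and then to control that term by the commutator that arises when the weight $\alpha$ is differentiated. First I would expand the right-hand weighted square by inserting $\nabla u$:
$$\|\nabla v - \mathbf{p}\|_{\alpha}^2 = \|\nabla u - \nabla v\|_{\alpha}^2 + \|\nabla u - \mathbf{p}\|_{\alpha}^2 + 2(\nabla v - \nabla u, \nabla u - \mathbf{p})_{\alpha}.$$
After rearranging, proving (\ref{eq:alpha-Hypercircle}) is equivalent to establishing
$$(\nabla(u-v), \nabla u - \mathbf{p})_{\alpha} \le \|\nabla\alpha\|_{L^{\infty}(\Omega')}\,\|u-v\|_{\Omega'}\,\|\nabla u - \mathbf{p}\|_{\Omega'}.$$
In the unweighted case ($\alpha \equiv 1$) this cross term vanishes exactly; this is precisely the orthogonality underlying Theorem \ref{Thm:PS}. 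The weighted version should therefore deviate from zero only by a term carrying $\nabla\alpha$.

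To extract that term I would set $w := u - v$ and $\mathbf{r} := \nabla u - \mathbf{p}$, and use the product rule $\alpha\nabla w = \nabla(\alpha w) - w\,\nabla\alpha$, so that
$$(\nabla w, \mathbf{r})_{\alpha} = \int_{\Omega} \nabla(\alpha w)\cdot\mathbf{r}\,dx - \int_{\Omega} w\,\nabla\alpha\cdot\mathbf{r}\,dx,$$
where the integration extends over all of $\Omega$ because $\alpha$ and $\nabla\alpha$ are supported in $\overline{\Omega'}$. The decisive point is that $\psi := \alpha w$ is an admissible test function: since $u,v \in V$ both equal $g_D$ on $\Gamma_D$, we have $w = 0$ on $\Gamma_D$, hence $\psi \in V_0$. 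I would then repeat the classical Prager--Synge computation with $\psi$ in place of $w$. Testing the weak form (\ref{eq:model_weak}) gives $(\nabla\psi,\nabla u) = (f,\psi) + (g_N,\psi)_{\Gamma_N}$, while integrating $(\nabla\psi,\mathbf{p})$ by parts and using $\mathrm{div}\,\mathbf{p} = -f$ together with $\mathbf{p}\cdot n = g_N$ on $\Gamma_N$ produces the same right-hand side. Hence $\int_{\Omega} \nabla(\alpha w)\cdot\mathbf{r}\,dx = 0$, and only the commutator $-\int_{\Omega} w\,\nabla\alpha\cdot\mathbf{r}\,dx$ survives.

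Finally I would bound this commutator. Restricting to the support $\Omega'$ of $\nabla\alpha$ and applying Cauchy--Schwarz gives
$$\left|\int_{\Omega'} w\,\nabla\alpha\cdot\mathbf{r}\,dx\right| \le \|\nabla\alpha\|_{L^{\infty}(\Omega')}\,\|w\|_{\Omega'}\,\|\mathbf{r}\|_{\Omega'},$$
which is exactly the estimate needed above. The main obstacle lies in the justification rather than the algebra: because the result is intended to apply when $u \notin H^2(\Omega)$, I cannot manipulate $\Delta u$ pointwise and must keep every step at the level of the $H^1$ weak form and the $H(\mathrm{div})$ duality pairing. Concretely, I must verify that $\mathbf{r} = \nabla u - \mathbf{p} \in H(\mathrm{div};\Omega)$ (which follows from $\Delta u = -f \in L^2$ and $\mathbf{p}\in H(\mathrm{div};\Omega)$), that $\psi = \alpha w$ genuinely lies in $V_0$, and that the boundary contributions vanish through the traces $\psi|_{\Gamma_D} = 0$ and $\mathbf{p}\cdot n = g_N$ on $\Gamma_N$. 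In the pure Neumann case $\Gamma_D = \emptyset$ one additionally observes that the compatibility condition makes $(\nabla\psi,\nabla u) = (f,\psi)+(g_N,\psi)_{\Gamma_N}$ valid for every $\psi \in H^1(\Omega)$, so $\psi = \alpha w$ remains admissible even without the zero-mean constraint defining $V_0$.
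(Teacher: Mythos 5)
Your proposal is correct and follows essentially the same route as the paper: both expand the weighted square, use $\alpha w$ (with $w=u-v$) as the test function together with the product rule $\alpha\nabla w=\nabla(\alpha w)-w\nabla\alpha$ and Green's formula for $\mathbf{p}$, and bound the surviving commutator $\int_{\Omega'} w\,\nabla\alpha\cdot(\nabla u-\mathbf{p})\,dx$ by Cauchy--Schwarz. The only difference is organizational (the paper splits the cross term into $(\nabla w,\nabla u)_\alpha$ and $(\nabla w,\mathbf{p})_\alpha$ and subtracts, whereas you cancel the $\nabla(\alpha w)$ contribution in one step via the Prager--Synge orthogonality), and your remarks on the admissibility of $\alpha w$ and the pure Neumann case are correct refinements of what the paper leaves implicit.
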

 \begin{proof}
 $\|\nabla v - \mathbf{p} \|_{\alpha}^2=\|(\nabla v - \nabla u )+ (\nabla u- \mathbf{p}) \|_{\alpha}^2$の展開式を考える．
 \begin{equation}  \|\nabla v - \mathbf{p} \|_{\alpha}^2  =   \|\nabla v - \nabla u \|_{\alpha}^2 + \| \nabla u- \mathbf{p}\|_{\alpha}^2  + 2 (\nabla v - \nabla u ,\nabla u- \mathbf{p})_{\alpha} \label{eq:alpha-expansion} \end{equation}
 上の展開式の交差項$(\nabla v - \nabla u ,\nabla u- \mathbf{p})_{\alpha} $について，$v-u$を$w$とおき，交差項を二つの項$(\nabla w,\nabla u)_{\alpha}$, $(\nabla w ,\mathbf{p})_{\alpha}$に分けて，それぞれの評価を与える．
 
 まず，$(\nabla w ,\nabla u)_{\alpha} $について，$\alpha w$に対して合成関数の微分公式$\alpha \nabla w =  -  w \nabla \alpha +  \nabla(\alpha w)$から以下の式が成り立つことが分かる．
 \begin{eqnarray}
    (\nabla w, \nabla u)_{\alpha}   = \int_{\Omega} \alpha \nabla w \cdot \nabla u ~dx & = &  - \int_{\Omega}  w (\nabla \alpha   \cdot \nabla u)  ~dx  + \int_{\Omega}  \nabla (\alpha w) \cdot \nabla u ~dx  \nonumber 
 \end{eqnarray}
 変分式(\ref{eq:model_weak})のテスト関数を$\alpha w$にすると，
$ (\nabla w,\nabla u)_{\alpha} $について以下の式が得られる．
 \begin{eqnarray}
    (\nabla w, \nabla u)_{\alpha}    = -\int_{\Omega }  w (\nabla \alpha   \cdot \nabla u)  ~dx +  \int_{\Omega} f (\alpha w)  ~dx  +\int_{\Gamma_N} g_N (\alpha w) ds  \label{eq:alpha-uw} 
 \end{eqnarray}
 次に，$(\nabla w ,\mathbf{p})_{\alpha}$について，Greenの公式を用いて以下の式が得られる．
 \begin{eqnarray}
 (\nabla w,\mathbf{p})_{\alpha} & = &\int_{\Omega} \nabla w \cdot (\alpha \mathbf{p})  ~dx = \int_{\Gamma_N} g_N (\alpha w) ds - \int_{\Omega} w \mbox{ div} (\alpha \mathbf{p}) ~dx \nonumber \\
 &=& \int_{\Gamma_N} g_N (\alpha w) ds- \int_{\Omega} (\alpha w)  \mbox{ div} \mathbf{p} ~dx - \int_{\Omega} w (\nabla \alpha \cdot \mathbf{p}) ~dx  \nonumber 
 \end{eqnarray}
 さらに，定理の仮定$ \mbox{div } \mathbf{p}+f=0$を利用することで以下の評価が得られる．
 \begin{eqnarray}
 (\nabla w,\mathbf{p})_{\alpha}  &=&  - \int_{\Omega} w (\nabla \alpha \cdot \mathbf{p}) ~dx + \int_{\Omega} (\alpha w) f ~dx +  \int_{\Gamma_N} g_N (\alpha w) ds   \label{eq:alpha-pw}
 \end{eqnarray}
 よって，式$(\ref{eq:alpha-uw}),~(\ref{eq:alpha-pw})$の差を取ることで，交差項に関して以下の等式が成り立つ．
 \begin{equation}
   (\nabla v - \nabla u ,\nabla u- \mathbf{p})_{\alpha} = - \int_{\Omega} (u-v) \nabla \alpha \cdot ( \nabla u- \mathbf{p} )~dx \label{eq:cross-term}
 \end{equation}
式 (\ref{eq:cross-term})の領域$\Omega$を$\Omega'$に入れ替えて，さらにH\"olderの不等式を用いることで，次の評価式が得られる．
 \begin{equation*}
   (\nabla v - \nabla u ,\nabla u- \mathbf{p})_{\alpha} \le 2 \| \nabla \alpha \|_{L^{\infty}(\Omega')} \| u- v\|_{\Omega'} \|\nabla u -\mathbf{p} \|_{\Omega'} 
 \end{equation*}
 最後に，式(\ref{eq:alpha-expansion})に対して上の結果を用いることで結論が得られる．
 \end{proof}
  \begin{Rem2}
  定理\ref{Thm:alpha-PS}の式(\ref{eq:alpha-uw}),~(\ref{eq:alpha-pw})において，関心のある領域$S$と$\Omega$が境界の一部を共有する場合でも，$\Gamma_N$上の積分は交差項の評価に影響を与えないことが分かる．
 \end{Rem2}

 \subsection{有限要素解の局所事後誤差評価}

 有限要素解の局所事後誤差評価を検討するために，与えられた$f \in L^2(\Omega)$に対して以下の補助問題を準備する．
  \begin{eqnarray} 
  (\nabla \overline{u},\nabla v ) &= (\pi_h f,v) + (g_N,v)_{\Gamma_N}  \quad \forall v \in V_0 \label{eq:aux}\\
  (\nabla \overline{u}_h,\nabla v_h ) &= (\pi_h f,v_h) + (g_N,v_h)_{\Gamma_N}  \quad \forall v_h \in V_0^h \label{eq:aux_CF}
  \end{eqnarray}
  
  \begin{Lem}
   $|u-\overline{u}|_{H^1(\Omega)} $に関して以下の評価が成り立つ．
   \begin{equation} |u-\overline{u}|_{H^1(\Omega)} \leq C_0h \|f-\pi_hf \|_{\Omega}  \label{eq:second} \end{equation}
   \label{Lem:aux}
  \end{Lem}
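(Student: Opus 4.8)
The plan is to exploit the fact that $u$ and $\overline{u}$ solve variational problems with the \emph{same} bilinear form and identical boundary data, so that their difference lies in the test space $V_0$ and satisfies a clean residual equation. First I would observe that, since both $u$ and $\overline{u}$ are taken in the trial space $V$ and hence share the Dirichlet datum $g_D$ on $\Gamma_D$, the difference $u-\overline{u}$ has vanishing trace on $\Gamma_D$ and therefore belongs to $V_0$. Subtracting the weak form (\ref{eq:model_weak}) from the auxiliary problem (\ref{eq:aux}) makes the Neumann contributions $(g_N,v)_{\Gamma_N}$ cancel, leaving the residual identity
\begin{equation*}
(\nabla(u-\overline{u}),\nabla v) = (f-\pi_h f,\ v) \quad \forall v \in V_0 .
\end{equation*}

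Next I would test this identity with the admissible choice $v = u-\overline{u} \in V_0$, which turns the left-hand side into the squared seminorm $|u-\overline{u}|_{H^1(\Omega)}^2$. The decisive step is then to insert the $L^2$-projection: because $(f-\pi_h f,\eta_h)=0$ for every $\eta_h \in X^h$ and $\pi_h(u-\overline{u}) \in X^h$, the right-hand side is unchanged if we replace the second argument by $(u-\overline{u})-\pi_h(u-\overline{u})$, yielding
\begin{equation*}
|u-\overline{u}|_{H^1(\Omega)}^2 = \bigl(f-\pi_h f,\ (u-\overline{u})-\pi_h(u-\overline{u})\bigr) .
\end{equation*}

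From here the estimate follows by routine steps. Applying the Cauchy--Schwarz inequality in $L^2(\Omega)$ and then the interpolation bound (\ref{eq:inter_err}) to $u-\overline{u}\in H^1(\Omega)$ gives
\begin{equation*}
|u-\overline{u}|_{H^1(\Omega)}^2 \leq \|f-\pi_h f\|_{\Omega}\,\|(u-\overline{u})-\pi_h(u-\overline{u})\|_{\Omega} \leq \|f-\pi_h f\|_{\Omega}\, C_0 h\, |u-\overline{u}|_{H^1(\Omega)} ,
\end{equation*}
and dividing through by $|u-\overline{u}|_{H^1(\Omega)}$ (the case $u=\overline{u}$ being trivial) produces the claimed bound (\ref{eq:second}).

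I do not anticipate a genuine obstacle: this is the standard Galerkin-orthogonality energy argument, and the only points needing care are (i) checking that $u-\overline{u}$ is a legitimate test function in $V_0$, so that the Neumann terms indeed cancel and the residual equation is exact, and (ii) invoking the orthogonality of $\pi_h$ at precisely the right moment so as to expose the interpolation error $(I-\pi_h)(u-\overline{u})$ rather than $u-\overline{u}$ itself. The main conceptual point is recognizing that this orthogonality is exactly what upgrades the crude bound $\|f-\pi_h f\|_{\Omega}\,\|u-\overline{u}\|_{\Omega}$ into the sharper $O(h)$ estimate.
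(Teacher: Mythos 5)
Your argument is correct and follows essentially the same route as the paper: subtract the two weak formulations to get the residual identity $(\nabla(u-\overline{u}),\nabla v)=(f-\pi_h f,v)$, exploit the orthogonality of $\pi_h$ to replace $v$ by $v-\pi_h v$, apply Cauchy--Schwarz together with the bound (\ref{eq:inter_err}), and take $v=u-\overline{u}$. The only cosmetic difference is that the paper inserts the projection and estimates for general $v\in V_0$ before specializing to $v=u-\overline{u}$, whereas you specialize first; the content is identical.
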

  \begin{proof}
$\| \nabla (u-\overline{u}) \|_{\Omega}$について，$u$と$\overline{u}$の定義から以下の等式が成り立つ．
  $$(\nabla(u-\overline{u}),\nabla v) = (f-\pi_h f,v) = (f-\pi_h f,v-\pi_h v) \quad \forall v \in V_0$$
  射影作用素$\pi_h$に関する誤差評価式(\ref{eq:inter_err})から以下の評価式を得る．
  $$(\nabla(u-\overline{u}),\nabla v) \leq \|f-\pi_hf \|_{\Omega} \cdot C_0h |v |_{H^1(\Omega)} \quad \forall v \in V_0$$
  試験関数$v$を$u-\overline{u}$とすることで，以下の評価が得られる．
  $$ |u-\overline{u}|_{H^1(\Omega)} \leq C_0h \|f-\pi_hf \|_{\Omega} $$
  \end{proof}
  
  \begin{Lem}
  与えられた$f \in L^2(\Omega)$に対して，変分問題(\ref{eq:model_weak})と(\ref{eq:CF})の解)の解をそれぞれ$u$と$u_h$とし，
  変分問題(\ref{eq:aux})と(\ref{eq:aux_CF})の解をそれぞれ$\overline{u},\overline{u}_h$とする.
  さらに，$\mathbf{p}_h \in H(\mbox{div};\Omega)$は以下の条件を満たすベクトル値関数とする．
  $$ \mbox{div }\mathbf{p}_h+\pi_h f=0,~ \mathbf{p}_h \cdot n = g_N \mbox{ on }  \Gamma_N $$
   このとき$\|\nabla (\overline{u} - u_h)\|_{\alpha}$に関して，以下の評価式が得られる．
  $$
    \|\nabla (\overline{u} - u_h)\|_{\alpha}^2  \leq \|\nabla u_h - \mathbf{p}_h\|_\alpha^2 
    +2 \|\nabla \alpha\|_{L^{\infty}(\Omega')} \|\overline{u}-u_h\|_{\Omega} ~ \|\nabla \overline{u} - \mathbf{p}_h\|_{\Omega } 
  $$
  \label{Lmm:aux-hypercircle}
  \end{Lem}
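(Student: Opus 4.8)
The plan is to recognize this lemma as nothing more than the weighted Hypercircle inequality of Theorem~\ref{Thm:alpha-PS} applied to the \emph{auxiliary} problem in place of the original model problem. The first step is to observe that $\overline{u}$, defined by (\ref{eq:aux}), is precisely the exact solution of the weak form (\ref{eq:model_weak}) with the source term $f$ replaced by its $L^2$-projection $\pi_h f$: indeed $\overline{u} \in V$ carries the same Dirichlet data $g_D$ on $\Gamma_D$ and Neumann data $g_N$ on $\Gamma_N$, and the only change is the right-hand side. Correspondingly, the hypothesis imposed on $\mathbf{p}_h$, namely $\mbox{div }\mathbf{p}_h + \pi_h f = 0$ in $\Omega$ and $\mathbf{p}_h \cdot n = g_N$ on $\Gamma_N$, is exactly the equilibrated-flux condition required of $\mathbf{p}$ in Theorem~\ref{Thm:alpha-PS} once $f$ is read as $\pi_h f$. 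Thus the triple $(\overline{u}, \pi_h f, \mathbf{p}_h)$ satisfies all the assumptions of that theorem.

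Second, I would invoke Theorem~\ref{Thm:alpha-PS} with the substitutions $u \mapsto \overline{u}$, $f \mapsto \pi_h f$, $\mathbf{p} \mapsto \mathbf{p}_h$, and with the test function $v \mapsto u_h$. This last choice is legitimate because $u_h \in V^h \subset V$ by the conforming construction, so $u_h$ is an admissible competitor in the weighted Hypercircle identity. The inequality then reads
\begin{equation*}
 \|\nabla \overline{u} - \nabla u_h\|_\alpha^2 + \|\nabla \overline{u} - \mathbf{p}_h\|_\alpha^2 \leq \|\nabla u_h - \mathbf{p}_h\|_\alpha^2 + 2\|\nabla\alpha\|_{L^\infty(\Omega')}\|\overline{u} - u_h\|_{\Omega'}\|\nabla\overline{u} - \mathbf{p}_h\|_{\Omega'} .
\end{equation*}

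Third, the target estimate follows by two elementary reductions. I would discard the nonnegative term $\|\nabla\overline{u} - \mathbf{p}_h\|_\alpha^2$ on the left-hand side, which only weakens the inequality and leaves $\|\nabla(\overline{u} - u_h)\|_\alpha^2$ there. Then, to convert the $\Omega'$-norms in the remainder term into the full $\Omega$-norms appearing in the statement, I would apply the norm chain (\ref{eq:norm_relation}), which yields $\|\overline{u} - u_h\|_{\Omega'} \leq \|\overline{u} - u_h\|_{\Omega}$ and $\|\nabla\overline{u} - \mathbf{p}_h\|_{\Omega'} \leq \|\nabla\overline{u} - \mathbf{p}_h\|_{\Omega}$. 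Combining these two reductions produces exactly the claimed bound.

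I do not anticipate a genuine obstacle: the content of the lemma is already contained in Theorem~\ref{Thm:alpha-PS}, and the only point that warrants care is the bookkeeping in the first step, namely verifying that replacing $f$ by $\pi_h f$ simultaneously turns $\overline{u}$ into the relevant exact solution and makes the flux condition on $\mathbf{p}_h$ coincide with the theorem's hypothesis. If anything subtle were to arise, it would be in confirming that $u_h$ genuinely qualifies as an element $v \in V$ (satisfying the Dirichlet data $g_D$ on $\Gamma_D$), but this is guaranteed by the inclusion $V^h \subset V$.
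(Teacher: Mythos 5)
Your proposal is correct and follows essentially the same route as the paper: the paper's proof likewise applies Theorem~\ref{Thm:alpha-PS} with $f$ replaced by $\pi_h f$ (so that $\overline{u}$ is the exact solution and the flux condition on $\mathbf{p}_h$ matches the hypothesis), takes $v = u_h$, drops the nonnegative term $\|\nabla\overline{u}-\mathbf{p}_h\|_\alpha^2$, and uses (\ref{eq:norm_relation}) to pass from the $\Omega'$-norms to the $\Omega$-norms. Your added bookkeeping about $u_h \in V^h \subset V$ and the role of $\pi_h f$ is implicit in the paper but entirely consistent with it.
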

  \begin{proof}
   定理\ref{Thm:alpha-PS}において$f = \pi_h f $とすると，以下の重み付きHypercircle式が得られる．
  $$
    \|\nabla (\overline{u} - u_h)\|_{\alpha}^2 + \|\nabla \overline{u} - \mathbf{p}_h \|_\alpha^2  \leq \|\nabla u_h - \mathbf{p}_h\|_\alpha^2 
    +2 \|\nabla \alpha\|_{L^{\infty}(\Omega')} \|\overline{u}-u_h\|_{\Omega'} ~ \|\nabla \overline{u} - \mathbf{p}_h\|_{\Omega' } 
  $$
  式(\ref{eq:norm_relation})の結果により，以下の評価式が得られる．
  \begin{equation}
   \| \nabla (\overline{u} - u_h)\|_{\alpha}^2  \leq \|\nabla u_h - \mathbf{p}_h\|_\alpha^2 
    +2 \|\nabla \alpha\|_{L^{\infty}(\Omega')} \|\overline{u}-u_h\|_{\Omega} ~ \|\nabla \overline{u} - \mathbf{p}_h\|_{\Omega} \label{eq:u-overu-alpha}
  \end{equation}
  \end{proof}

 \begin{Thm}[有限要素解の局所事後誤差評価]
  与えられた$f \in L^2(\Omega)$に対して，$u$と$u_h$を，それぞれ変分問題(\ref{eq:model_weak})と(\ref{eq:CF})の解とし，
  $\mathbf{p}_h \in H(\mbox{div};\Omega)$は以下の条件を満たすベクトル値関数とする．
  $$ \mbox{div }\mathbf{p}_h+\pi_h f=0,~ \mathbf{p}_h \cdot n = g_N \mbox{ on }  \Gamma_N $$
  このとき，以下の誤差評価が得られる．
  \begin{equation} \|\nabla u - \nabla u_h \|_{S}   \leq C_0h \|f -  \pi_h f\|_{\Omega} + \sqrt{(Err^{(1)})^2+(Err^{(2)})^2+(Err^{(3)})^2} \label{eq:local_est} \end{equation}
  ただし，  
  \begin{eqnarray*}
   (Err^{(1)})^2 &:= &2 C_p  C_0h  \|\nabla \alpha\|_{L^{\infty}(\Omega')} \cdot   \|f-\pi_h f \|_{\Omega} \cdot \|\nabla u_h - \mathbf{p}_h\|_{\Omega} , \\ 
   (Err^{(2)})^2 &:= & 2 C(h) \|\nabla \alpha\|_{L^{\infty}(\Omega')} \cdot   \|\nabla u_h - \mathbf{p}_h\|_{\Omega}^2 ,\\
   (Err^{(3)})^2 &:= & \| \nabla u_h -\mathbf{p}_h \|_{\alpha}^2.
  \end{eqnarray*}
  ここで，$C_p$は式(\ref{eq:poincare_constant})に現れるPoincar\'e定数である．
  \label{Thm:local_est}
 \end{Thm}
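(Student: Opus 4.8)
The plan is to pass from the local seminorm $\|\nabla u - \nabla u_h\|_S$ to the weighted seminorm via the first inequality of (\ref{eq:norm_relation}), then insert the auxiliary solution $\overline u$ and split by the triangle inequality,
$$ \|\nabla u - \nabla u_h\|_\alpha \le \|\nabla(u-\overline u)\|_\alpha + \|\nabla(\overline u - u_h)\|_\alpha . $$
The first summand is handled at once: by (\ref{eq:norm_relation}) it is at most $\|\nabla(u-\overline u)\|_\Omega = |u-\overline u|_{H^1(\Omega)}$, which Lemma \ref{Lem:aux} (estimate (\ref{eq:second})) bounds by $C_0 h\|f-\pi_h f\|_\Omega$; this is exactly the first term on the right of (\ref{eq:local_est}). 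Everything then reduces to $\|\nabla(\overline u - u_h)\|_\alpha$, for which Lemma \ref{Lmm:aux-hypercircle} already gives
$$ \|\nabla(\overline u - u_h)\|_\alpha^2 \le \|\nabla u_h - \mathbf{p}_h\|_\alpha^2 + 2\|\nabla\alpha\|_{L^\infty(\Omega')}\,\|\overline u - u_h\|_\Omega\,\|\nabla\overline u - \mathbf{p}_h\|_\Omega . $$
Since the first summand here is precisely $(Err^{(3)})^2$, the whole task collapses to bounding the two factors in the cross term by the ingredients of $(Err^{(1)})^2$ and $(Err^{(2)})^2$.

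For the factor $\|\nabla\overline u - \mathbf{p}_h\|_\Omega$ I would apply the unweighted ($\alpha\equiv 1$) case of Theorem \ref{Thm:alpha-PS} to the auxiliary problem (\ref{eq:aux}), whose source is $\pi_h f$ and whose equilibrated flux $\mathbf{p}_h$ satisfies $\mbox{div}\,\mathbf{p}_h+\pi_h f=0$ and $\mathbf{p}_h\cdot n = g_N$ on $\Gamma_N$; with $\nabla\alpha=0$ the cross term (\ref{eq:cross-term}) vanishes and the inequality becomes the Prager--Synge equality. Taking the free function $v=u_h\in V$ yields
$$ |\overline u - u_h|_{H^1(\Omega)}^2 + \|\nabla\overline u - \mathbf{p}_h\|_\Omega^2 = \|\nabla u_h - \mathbf{p}_h\|_\Omega^2, $$
hence both $\|\nabla\overline u - \mathbf{p}_h\|_\Omega \le \|\nabla u_h - \mathbf{p}_h\|_\Omega$ and $|\overline u - u_h|_{H^1(\Omega)}\le \|\nabla u_h - \mathbf{p}_h\|_\Omega$, which will be reused below.

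The remaining, delicate factor is the $L^2$ norm $\|\overline u - u_h\|_\Omega$, where the sharp coefficient $C(h)$ (not the crude Poincar\'e constant $C_p$) must be produced on the dominant flux term. The decisive point is that $u_h\neq\overline u_h$ in general, because $f$ and $\pi_h f$ are not $L^2$-equivalent against piecewise-linear test functions; I would therefore split $\overline u - u_h=(\overline u-\overline u_h)+(\overline u_h-u_h)$. For the discrete discrepancy $w_h:=\overline u_h-u_h\in V_0^h$, subtracting (\ref{eq:aux_CF}) from (\ref{eq:CF}) gives $(\nabla w_h,\nabla v_h)=(\pi_h f-f,v_h)$; testing with $v_h=w_h$, using the orthogonality of $f-\pi_h f$ to $X^h$ and the interpolation estimate (\ref{eq:inter_err}) yields $|w_h|_{H^1(\Omega)}\le C_0 h\|f-\pi_h f\|_\Omega$, so the Poincar\'e inequality (\ref{eq:poincare_constant}) gives $\|w_h\|_\Omega\le C_p C_0 h\|f-\pi_h f\|_\Omega$. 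For the genuine discretization error $\overline u-\overline u_h$ I would invoke the $L^2$--$H^1$ bound (\ref{eq:grobal-L2}): although Theorem \ref{Thm:global_est} is stated for homogeneous data, its Aubin--Nitsche proof uses only the Galerkin orthogonality of $\overline u-\overline u_h$ and the homogeneous dual problem (whose $H^1$ estimate is (\ref{eq:grobal-H1})), so it transfers verbatim and gives $\|\overline u-\overline u_h\|_\Omega\le C(h)\,|\overline u-\overline u_h|_{H^1(\Omega)}$. Since $\overline u_h$ is the Galerkin projection of $\overline u$ and $u_h$ lies in the same affine finite element space, $|\overline u-\overline u_h|_{H^1(\Omega)}\le|\overline u-u_h|_{H^1(\Omega)}\le\|\nabla u_h-\mathbf{p}_h\|_\Omega$ by the identity above. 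Combining the two parts,
$$ \|\overline u - u_h\|_\Omega \le C_p C_0 h\|f-\pi_h f\|_\Omega + C(h)\|\nabla u_h - \mathbf{p}_h\|_\Omega . $$

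Substituting this and $\|\nabla\overline u-\mathbf{p}_h\|_\Omega\le\|\nabla u_h-\mathbf{p}_h\|_\Omega$ into the cross term and expanding, the cross term is at most $(Err^{(1)})^2+(Err^{(2)})^2$, so $\|\nabla(\overline u-u_h)\|_\alpha^2\le(Err^{(1)})^2+(Err^{(2)})^2+(Err^{(3)})^2$; together with the first-term bound from Lemma \ref{Lem:aux}, this is exactly (\ref{eq:local_est}). I expect the main obstacle to be precisely the sharp $L^2$ estimate of $\overline u-u_h$: one must resist applying Poincar\'e to the whole difference (which would replace $C(h)$ by the much larger $C_p$ on the dominant flux term) and instead isolate the data-oscillation part $\overline u_h-u_h$ from the genuinely discretization-driven part $\overline u-\overline u_h$, applying the duality estimate only to the latter.
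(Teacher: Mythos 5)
Your proposal is correct and follows the paper's overall architecture exactly: localization via (\ref{eq:norm_relation}), insertion of the auxiliary solution $\overline{u}$ with Lemma \ref{Lem:aux}, the weighted Hypercircle bound of Lemma \ref{Lmm:aux-hypercircle}, the unweighted Prager--Synge identity for $\overline{u}$ to control $\|\nabla\overline{u}-\mathbf{p}_h\|_{\Omega}$ and $|\overline{u}-u_h|_{H^1(\Omega)}$, and finally an $L^2$ bound of $\|\overline{u}-u_h\|_{\Omega}$ of the form $C_pC_0h\|f-\pi_hf\|_{\Omega}+C(h)\|\nabla u_h-\mathbf{p}_h\|_{\Omega}$, which is precisely the paper's estimate (\ref{eq:u-overu-L2est}). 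The one place where you genuinely diverge is that last sub-step. The paper runs a single Aubin--Nitsche argument directly on the non-Galerkin error $\overline{u}-u_h$: it splits the dual solution as $\phi=\phi_h+(\phi-\phi_h)$, absorbs the loss of Galerkin orthogonality into the term $(\nabla\phi_h,\nabla(\overline{u}-u_h))=(\phi_h-\pi_h\phi_h,\pi_hf-f)\le C_pC_0h\|\overline{u}-u_h\|_{\Omega}\|f-\pi_hf\|_{\Omega}$ using (\ref{eq:inter_err}) and the stability (\ref{eq:stablity-galerkin}), and applies (\ref{eq:grobal-H1}) to $\phi-\phi_h$. You instead split the primal function, $\overline{u}-u_h=(\overline{u}-\overline{u}_h)+(\overline{u}_h-u_h)$, treat the data-oscillation part $\overline{u}_h-u_h\in V_0^h$ by a discrete energy estimate plus Poincar\'e, and apply the duality estimate (\ref{eq:grobal-L2}) only to the genuine Galerkin error $\overline{u}-\overline{u}_h$, then use best approximation to return to $\|\nabla u_h-\mathbf{p}_h\|_{\Omega}$. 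Both routes are valid and yield identical constants; yours has the advantage of invoking Theorem \ref{Thm:global_est} as a black box on an actual Galerkin error (at the cost of the brief justification that (\ref{eq:grobal-L2}) transfers to the inhomogeneous Neumann data, which you supply correctly), while the paper's version avoids introducing $\overline{u}_h$ at all but must handle the failure of Galerkin orthogonality inside the duality argument. Your closing observation about where the difficulty lies --- producing $C(h)$ rather than $C_p$ on the dominant flux term --- matches the point of the paper's computation.
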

 \begin{proof}
  まず，式(\ref{eq:norm_relation})を用いて，$\| \nabla (u-u_h) \|_S$に関する以下の展開式が得られる．
  \begin{equation} \| \nabla (u-u_h) \|_S \leq \| \nabla (u-\overline{u}) \|_S + \| \nabla (\overline{u} - u_h) \|_S \leq \| \nabla (u-\overline{u}) \|_{\Omega}+ \| \nabla (\overline{u} - u_h) \|_{\alpha} \label{eq:expansion2} \end{equation}
   式(\ref{eq:expansion2}),~(\ref{eq:second})から，以下の評価が得られる．
  \begin{equation} \| \nabla (u-u_h) \|_S \leq C_0 h \| f - \pi_h f \|_{\Omega}+ \| \nabla (\overline{u} - u_h) \|_{\alpha} \label{eq:local1} \end{equation}
  定理\ref{Thm:alpha-PS}において$f = \pi_h f , v = u_h$とすると，重み付きHypercircle式(\ref{eq:alpha-Hypercircle})によって，以下の評価式が得られる．
  \begin{equation}
   \| \nabla (\overline{u} - u_h)\|_{\alpha}^2  \leq \|\nabla u_h - \mathbf{p}_h\|_\alpha^2 
    +2 \|\nabla \alpha\|_{L^{\infty}(\Omega')} \|\overline{u}-u_h\|_{\Omega} ~ \|\nabla \overline{u} - \mathbf{p}_h\|_{\Omega} \label{eq:u-overu-alpha}
  \end{equation}

  式(\ref{eq:local1}),(\ref{eq:u-overu-alpha})から，$\| \nabla (u-u_h) \|_S$に関して以下の評価式を得る．
  \begin{equation}
   \| \nabla (u-u_h) \|_S \leq C_0 h \| f - \pi_h f \|_{\Omega} +\sqrt { \|\nabla u_h - \mathbf{p}_h\|_\alpha^2 +2 \|\nabla \alpha\|_{L^{\infty}(\Omega')} \|\overline{u}-u_h\|_{\Omega} ~ \|\nabla \overline{u} - \mathbf{p}_h\|_{\Omega} }
   \label{eq:origin_main}
  \end{equation}
 
  次に，$\| \nabla (u-u_h) \|_S$の評価式(\ref{eq:origin_main})にある$\|\overline{u}-u_h\|_{\Omega},~\|\nabla \overline{u} - \mathbf{p}_h\|_{\Omega} $を検討する．
  \begin{itemize}
   \item[(a)] $\|\nabla \overline{u} - \mathbf{p}_h\|_{\Omega} $の評価: ~$\overline{u} \in V$と$ \mbox{div }\mathbf{p}_h+\pi_hf=0 ,~\mathbf{p}_h \cdot n = g_N \mbox{ on }  \Gamma_N $を満たすベクトル値関数$\mathbf{p_h} \in W^h$に対して，以下のHypercircle式が得られる．
  $$\|\nabla \overline{u} - \nabla v_h \|_{\Omega}^2 + \| \nabla \overline{u}- \mathbf{p}_h\|_{\Omega}^2 = \|\nabla v_h - \mathbf{p}_h \|_{\Omega}^2 \quad  \forall v_h \in V^h $$
  よって，以下の評価式が得られる．
  \begin{eqnarray}
   \|\nabla \overline{u} - \nabla u_h \|_{\Omega} & \leq & \|\nabla u_h - \mathbf{p}_h \|_{\Omega} \label{eq:overu-v}\\
   \| \nabla \overline{u}-\mathbf{p}_h \|_{\Omega} & \leq & \|\nabla u_h - \mathbf{p_h} \|_{\Omega} \label{eq:up-est}
  \end{eqnarray}
  
    \item[(b)] $\|\overline{u}-u_h\|_{\Omega}$の評価: ~以下の双対問題を定義する．
    $$\mbox{Find } \phi \in V_0 \mbox{ s.t. } (\nabla \phi,\nabla v) = (\overline{u}-u_h,v) \quad \forall v \in V_0 $$
    双対問題の解$\phi$に対して，$\phi_h:=P_h \phi$とすると（$P_h$は式(\ref{eq:galerkin})で定義したGalerkin射影である），$\|\overline{u}-u_h\|_{\Omega}$に関して以下の式が得られる．
    ここで$u_h$は$\overline{u}$のGalerkin射影ではないことに注意する．
    \begin{eqnarray*}
     \| \overline{u}-u_h \|_{\Omega}^2 &= & (\overline{u}-u_h,\overline{u}-u_h) 
                                         = (\nabla \phi,\nabla(\overline{u}-u_h)) \\
                                         & = & (\nabla \phi_h,\nabla(\overline{u}-u_h)) + (\nabla (\phi-\phi_h),\nabla(\overline{u}-u_h))  \\
                                         & \leq & (\nabla \phi_h,\nabla(\overline{u}-u_h)) + \|\nabla (\phi-\phi_h) \|_{\Omega} \cdot \|\nabla(\overline{u}-u_h) \|_{\Omega} 
    \end{eqnarray*}
   さらに式(\ref{eq:overu-v})を用いて，以下の展開式が得られる．
    $$\| \overline{u}-u_h \|_{\Omega}^2 \leq (\nabla \phi_h,\nabla(\overline{u}-u_h)) + \|\nabla (\phi-\phi_h) \|_{\Omega} \cdot \|\nabla u_h - \mathbf{p}_h \|_{\Omega} $$
    以降では展開式の$(\nabla \phi_h,\nabla(\overline{u}-u_h))$と$\|\nabla (\phi-\phi_h) \|_{\Omega}$について，それぞれの評価を与える．
    まず，展開式の$(\nabla \phi_h,\nabla(\overline{u}-u_h))$について，射影作用素$\pi_h$に関する誤差評価式(\ref{eq:inter_err})から以下の評価式を得る．
   \begin{eqnarray*} 
    (\nabla \phi_h,\nabla(\overline{u}-u_h)) & = & (\phi_h,\pi_h f -f ) = (\phi_h - \pi_h \phi_h ,\pi_h f -f ) \notag \\ 
    & \leq & C_0h \|\nabla \phi_h \|_{\Omega} \|f-\pi_h f \|_{\Omega} \\
    & \leq  & C_p C_0h \| \overline{u} - u_h \|_{\Omega} \|f-\pi_h f \|_{\Omega}
   \end{eqnarray*}
   上の不等式の最後に，Galerkin射影作用素に関する安定性の評価式(\ref{eq:stablity-galerkin})を用いた．
   次に，展開式の$ \|\nabla (\phi-\phi_h) \|_{\Omega}$ついて，定理\ref{Thm:global_est}の事前誤差評価式(\ref{eq:grobal-H1})から以下の評価式を得る．
   $$\|\nabla (\phi-\phi_h) \|_{\Omega} \leq C(h)  \|\overline{u}-u_h \|_{\Omega} $$
   
   よって，$\| \overline{u}-u_h \|_{\Omega}$に関して以下の評価式が得られる．
   \begin{eqnarray} \| \overline{u}-u_h \|_{\Omega} 
    & \leq  C_p C_0 h\|f-\pi_h f \|_{\Omega} + C(h) \|\nabla u_h - \mathbf{p_h} \| _{\Omega} \label{eq:u-overu-L2est} 
   \end{eqnarray}
   
  \end{itemize}
  目標としている式(\ref{eq:origin_main})の根号の中の評価は式(\ref{eq:u-overu-alpha})に対して，評価式(\ref{eq:up-est}),~(\ref{eq:u-overu-L2est})を用いることで得られる．
  \begin{eqnarray*}
   &\quad&{  \|\nabla u_h - \mathbf{p}_h\|_\alpha^2 +2 \|\nabla \alpha\|_{L^{\infty}(\Omega')} \|\overline{u}-u_h\|_{\Omega} ~ \|\nabla \overline{u} - \mathbf{p}_h\|_{\Omega} } \\
   && \leq { 2 \|\nabla \alpha\|_{L^{\infty}(\Omega')} \left ( C_p C_0 h\|f-\pi_h f \|_{\Omega} + C(h) \|\nabla u_h - \mathbf{p_h} \| _{\Omega}  \right )  \|\nabla u_h - \mathbf{p_h} \| _{\Omega}+ \|\nabla u_h - \mathbf{p}_h\|_\alpha^2} \\
   && = {(Err^{(1)})^2 +(Err^{(2)})^2 + (Err^{(3)})^2}
  \end{eqnarray*}
  以上により定理の結論を得る．
 \end{proof}
 
 \begin{Rem2}
   定理\ref{Thm:local_est}の中に現れる$Err^{(1)},Err^{(2)}$および$Err^{(3)}$について，$Err^{(1)},Err^{(2)}$は有限要素解の大域的な誤差に関わる項で，$Err^{(3)}$は有限要素解の局所的な誤差に関わる主要項である．
   $Err^{(1)},Err^{(2)}$のオーダーは$O(h^{1.5})$，$Err^{(3)}$のオーダーは$O(h^{1})$となることが期待される．特に，メッシュサイズ$h$が十分小さいとき主要項$Err^{(3)}$が支配的になることが期待される．
   \label{rem:order}
  \end{Rem2}
   
  \section{数値実験}
 \subsection{準備}
 5.2,~5.3節では，2次元領域で定義された，以下の斉次Dirichlet問題に対して数値実験を行う．
 \begin{eqnarray}
 \left \{
  \begin{array}{l}
  - \Delta u = 2 \pi \sin(\pi x) \sin(\pi y) \mbox{ in } \Omega \\
  u = 0 \mbox{ on } \partial \Omega 
  \end{array}
  \right . 
  \label{eq:experiment}
 \end{eqnarray}
  5.4節では，抵抗率測定の問題に応じて，3次元領域で定義された混合境界値問題について検討する．
  
 5.2,~5.3節では，最初に帯状領域$B_{S}$の幅を決定する．$B_{S}$の幅を大きく取ると，誤差評価式(\ref{eq:local_est})に現れる主要項$Err^{(3)}$が大きなる一方で，$B_{S}$の幅を小さく取ると，領域全体の誤差評価に関わる項${Err^{(1)}},{Err^{(2)}}$に含まれる$\| \nabla \alpha\|_{L^{\infty}(\Omega)}$が大きくなる．
 そこで，$B_{S}$の幅を$0.2 \sim 0.375$まで$0.05$ずつ取って，$B_{S}$の幅の影響を調べる．次に，適切な$B_{S}$の幅を取って，メッシュサイズ$h$と誤差評価式(\ref{eq:local_est})の関係を調べる．
  
   5節を通して，誤差評価式(\ref{eq:local_est})に現れる記号${Err^{(1)}},{Err^{(2)}},{Err^{(3)}}$を引き続き使用し，誤差評価式(\ref{eq:local_est})を計算することで得られた値を$\overline{E}_{L}$で表す．  
  すなわち，
  $$\overline{E}_{L} := C_0h \|f -  \pi_h f\|_{\Omega} + \sqrt{(Err^{(1)})^2+(Err^{(2)})^2+(Err^{(1)})^2}  .$$
  さらに以下の記号を導入する．
  $$ 
   E_{L} := \| \nabla u - \nabla u_h  \|_{S},~ \overline{E}_{G} := \| \nabla u_h - \mathbf{p}_h  \|_{\Omega} + C_0h \|f -\pi_h f \|_{\Omega}
  $$
  
\subsection{正方形領域}
  正方形領域の場合，問題(\ref{eq:experiment})の厳密解$u = \sin(\pi x)\sin(\pi y)$を利用して$\overline{E}_{L}$と$ E_{L}$の比較を行うことができる．
  数値実験の条件を以下に示す．
 \begin{enumerate}
  \item[(a)] 領域$\Omega  := (0,1)^2$， 部分領域$S: = (0.375,0.625)^2$．
  \item[(b)] 一様メッシュのサイズ$\displaystyle h:=\frac{1}{8},~\frac{1}{16},~\frac{1}{32},~\frac{1}{64},~\frac{1}{128}$．
 \end{enumerate} 
 
 \noindent \textbf{帯状領域$B_{S}$の幅の選択} 　メッシュサイズ$h= 1/64,1/128$の場合に帯状領域$B_{S}$の幅と$\overline{E}_{L}$の関係をFig.\ref{fig:RD-tol}-(a),(b)に示す．
 この図から最適な$B_{S}$の幅は約$0.3$であり，この付近での幅の$1 \%$の変化に対して，$\overline{E}_{L}$の変化が約$1 \%$となっていることが分かる．
 したがって，最適な$B_{S}$の幅の周りでは幅の変化が与える誤差評価への影響は小さいと考えられる．  
   \begin{figure}[h!]
     	\begin{center}
	\includegraphics[width=3cm,angle=270]{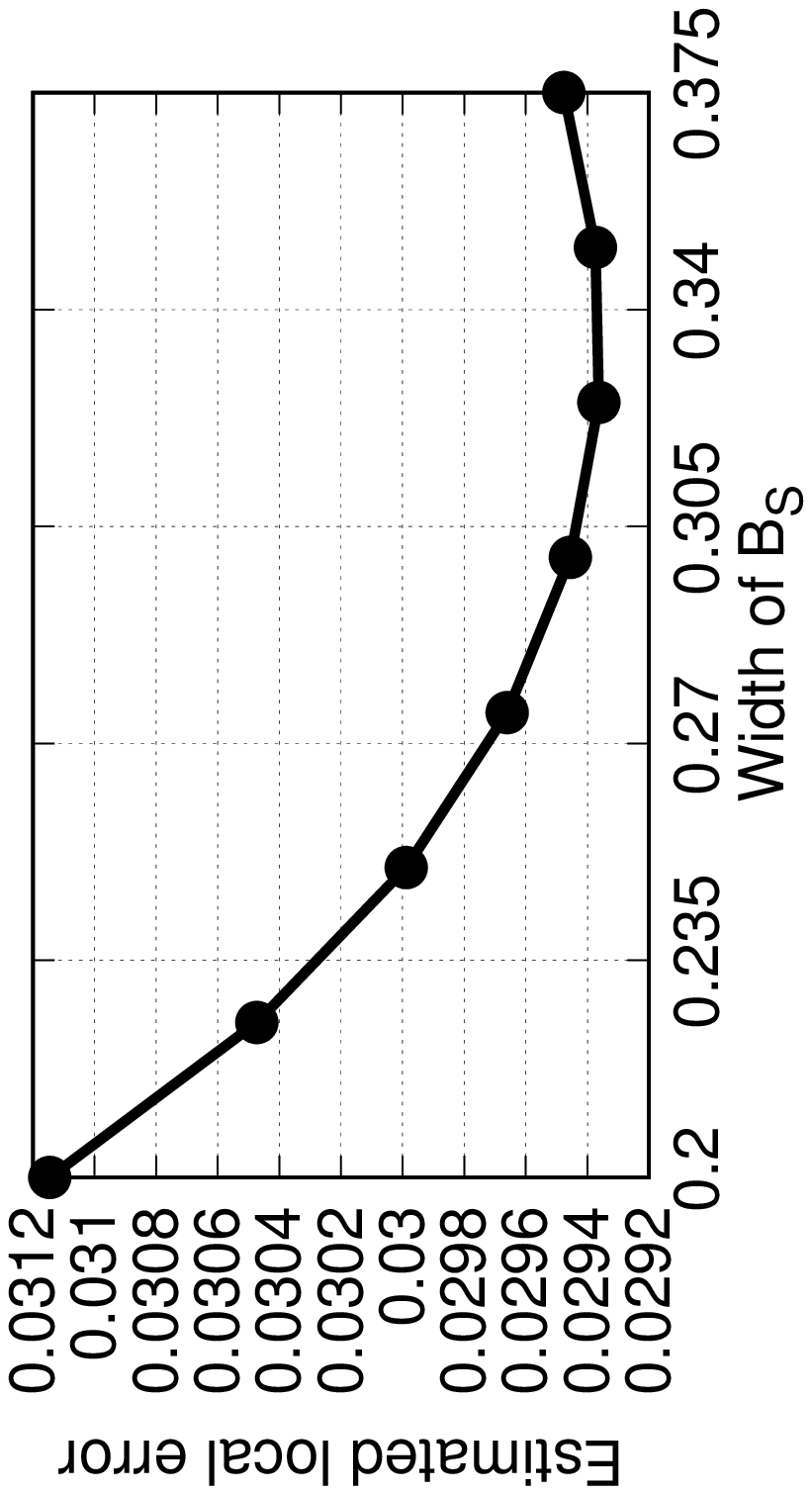} \quad
    	\includegraphics[width=3cm,angle=270]{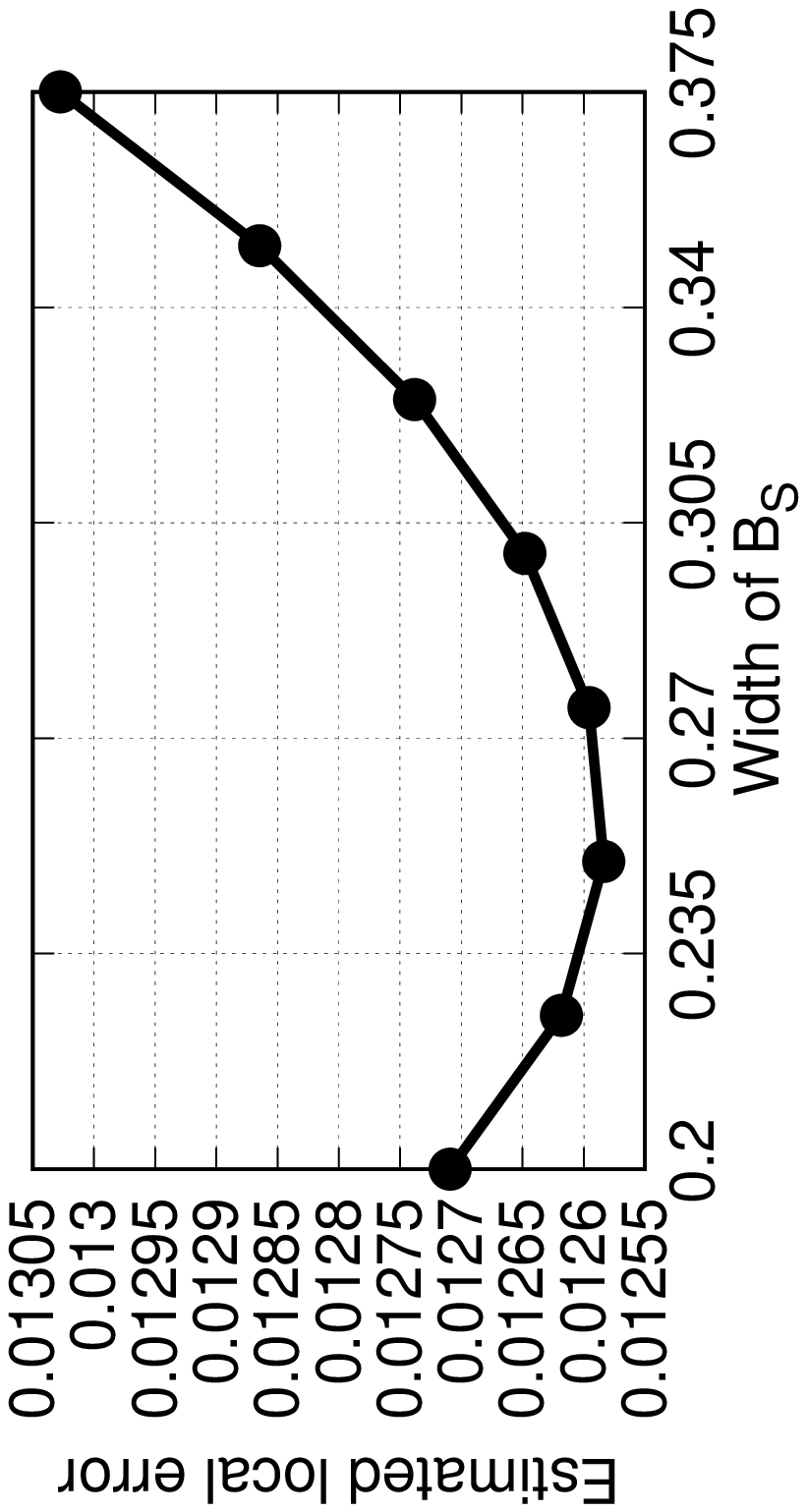}\\
	{(a) $h=1/64$} \hspace{90pt}  {(b) $h=1/128$} 
    	\caption{The relation between band width of $B_{S}$ and error estimation.}
    	\label{fig:RD-tol}
    	\end{center}
      \end{figure}

  メッシュサイズ$h$と$\overline{E}_{L}$の関係をTable~\ref{table:result_R},~Fig.\ref{fig:Errest_S}-(a),(b)に示す．ここで，$B_{S}$の幅を$0.3$とした．
  
  \begin{table}[htpb]
    \caption{The relation between mesh size $h$ and error estimation}
    \begin{center}
     \begin{tabular}{|c|cc|c|ccc|cc|}
      \hline
      \rule[-0.05cm]{0pt}{0.4cm}{} 
      $h$     & $\kappa_h$ & $C(h)$ &  $\overline{E}_{L}$ & $Err^{(1)}$     & $Err^{(2)}$    & $Err^{(3)}$     & $E_{L}$ & $\overline{E}_{G}$  \\  \hline \hline
      1/8    & 0.057& 0.070 & {0.488}  & 0.195 & 0.337 & 0.198 & {0.131}  & 0.546      \\ 
      1/16  &  0.030 &0.036 &{0.182}  & 0.069 & 0.123 & 0.092 & {0.063}  & 0.264      \\ 
      1/32  &  0.015 & 0.018 &{0.070}  & 0.025 & 0.044 & 0.044 & {0.031}  & 0.129      \\ 
      1/64  & 0.008 & 0.009 &{0.029}  & 0.009 & 0.016 & 0.022 & {0.015}  & 0.064      \\ 
      1/128 & 0.004 &0.005 &{0.013}  & 0.003 & 0.006 & 0.011 & {0.008}  & 0.032      \\ \hline    
      \end{tabular}
      \label{table:result_R}
     \end{center}
    \end{table}
     \begin{figure}[h!]
     	\begin{center}
	\includegraphics[angle=0,width=5.5cm]{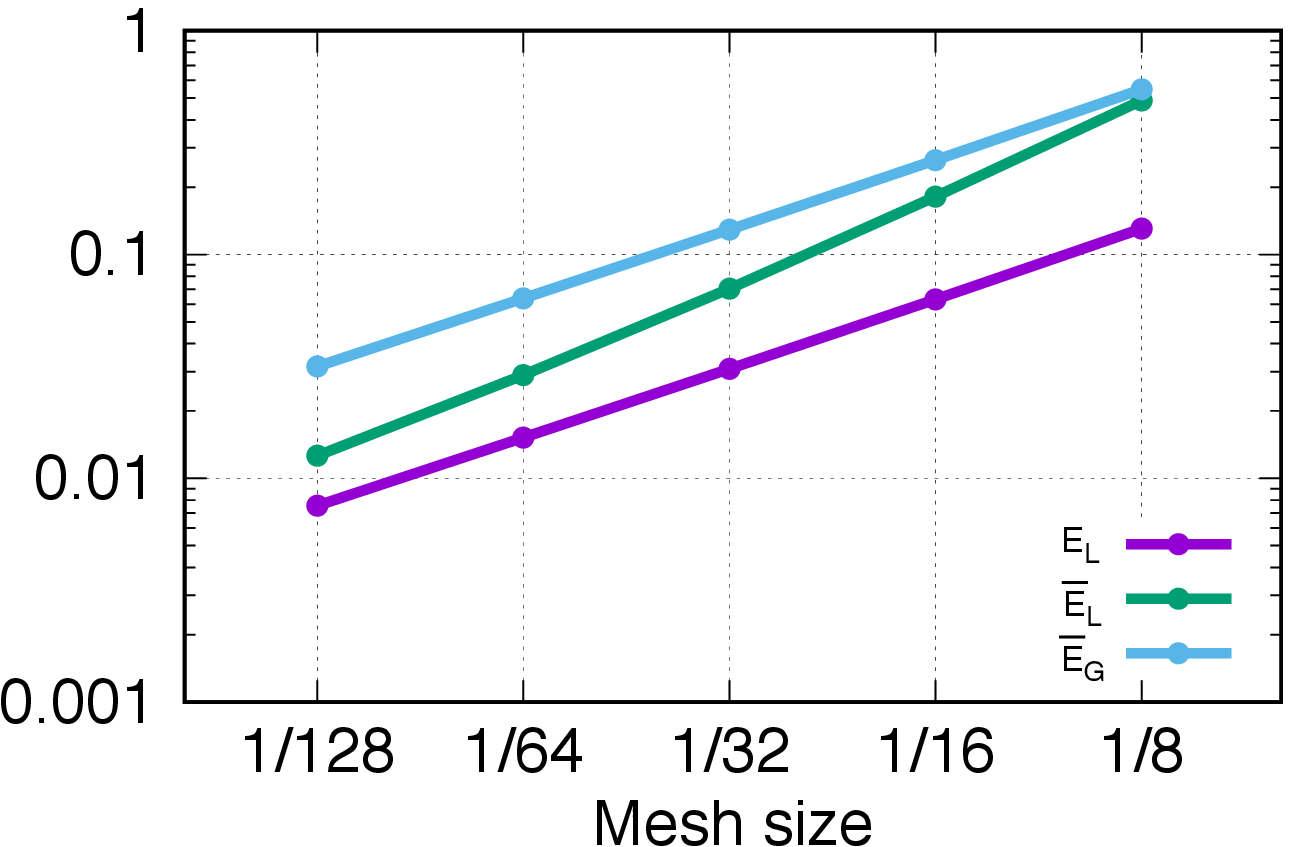} \hspace{20pt}
    	\includegraphics[angle=0,width=5.5cm]{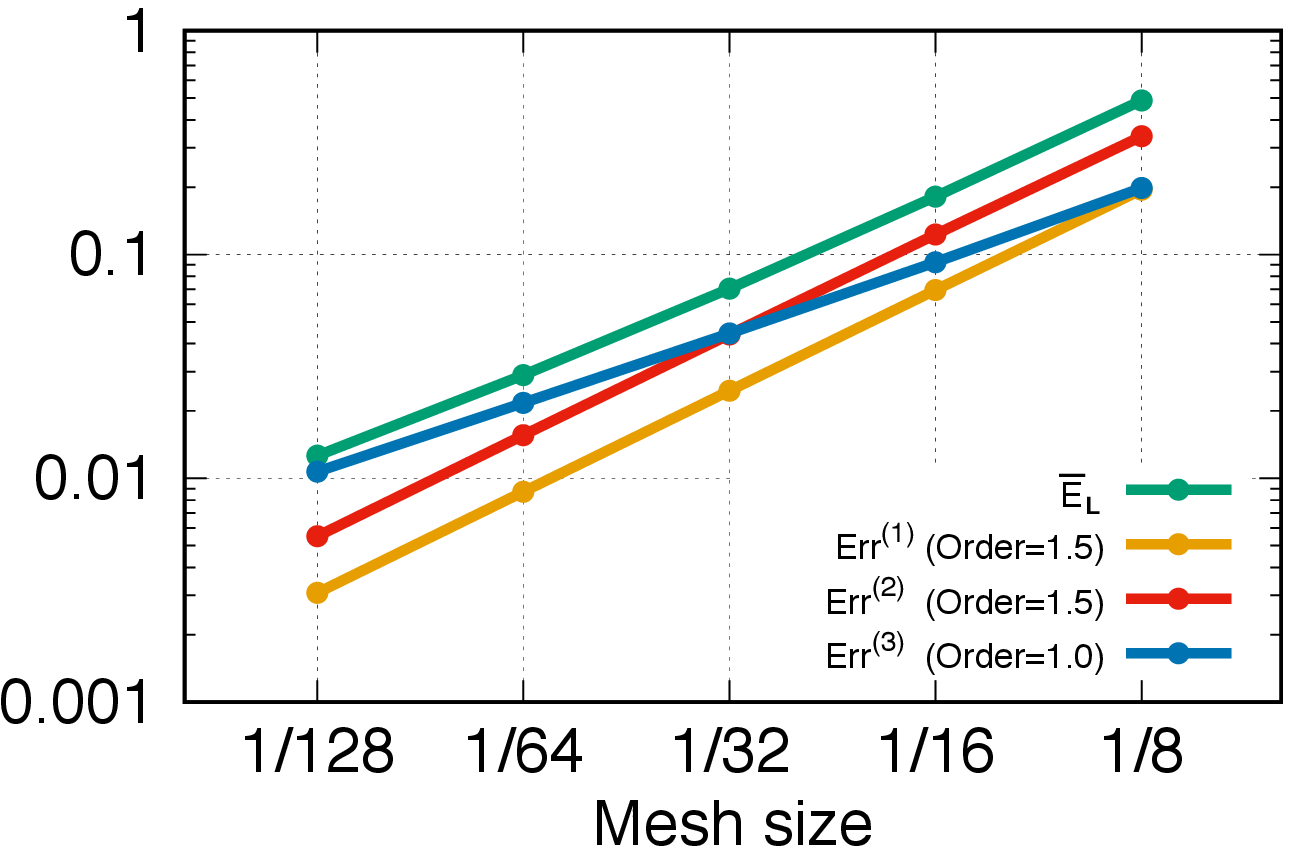} \\
	{\hspace{25pt} (a) Comparison of error estimations} \hspace{30pt}  {(b) Components of local error estimation} 
    	\caption{Numerical results}
    	\label{fig:Errest_S}
    	\end{center}
      \end{figure}
  
  Table~\ref{table:result_R},~Fig.\ref{fig:Errest_S}-(a),(b)から，$\overline{E}_{L}$は$E_{L}$の上界を与えていることがわかり，
  $h \leq 1/32$のときに誤差評価式(\ref{eq:local_est})の主要項$Err^{(3)}$が支配的である事も分かる．したがって，注意\ref{rem:order}で述べたことが数値実験よって確認できた．
\subsection{L字型領域}
  L字型領域上の問題(\ref{eq:experiment})の解$u$は，L字型領域の非凸な角点での特異性によって$u \notin H^2(\Omega)$となることが知られている．
  提案する誤差評価式(\ref{eq:local_est})は，解の特異性にも自然に対応できるほか，$S$と$\Omega$が境界の一部を共有する場合でも誤差評価が可能である．
 この例では，以下の条件で数値実験を行う．
 \begin{enumerate}
  \item[(a)] 領域$\Omega$: $(0,1)^2 \setminus [0.5,1]^2$，部分領域$S:=(0.375,0.625)^2 \setminus [0.5,1]^2$．
  \item[(b)] 一様メッシュのサイズ$\displaystyle h:=\frac{1}{8},~\frac{1}{16},~\frac{1}{32},~\frac{1}{64},~\frac{1}{128}$．
 \end{enumerate} 
 
 \noindent \textbf{帯状領域$B_{S}$の幅の選択} 　メッシュサイズ$h= 1/64,1/128$の場合に帯状領域$B_{S}$の幅と$\overline{E}_{L}$の関係をFig.\ref{fig:RD-tol-L}-(a),(b)に示す．
 この例の領域の設定によって，$B_{S}$の幅は最大$0.375$まで取れるので，以降の誤差評価の検討では$B_{S}$の幅を$0.375$とする．
   \begin{figure}[h!]
     	\begin{center}
	\includegraphics[width=3cm,angle=270]{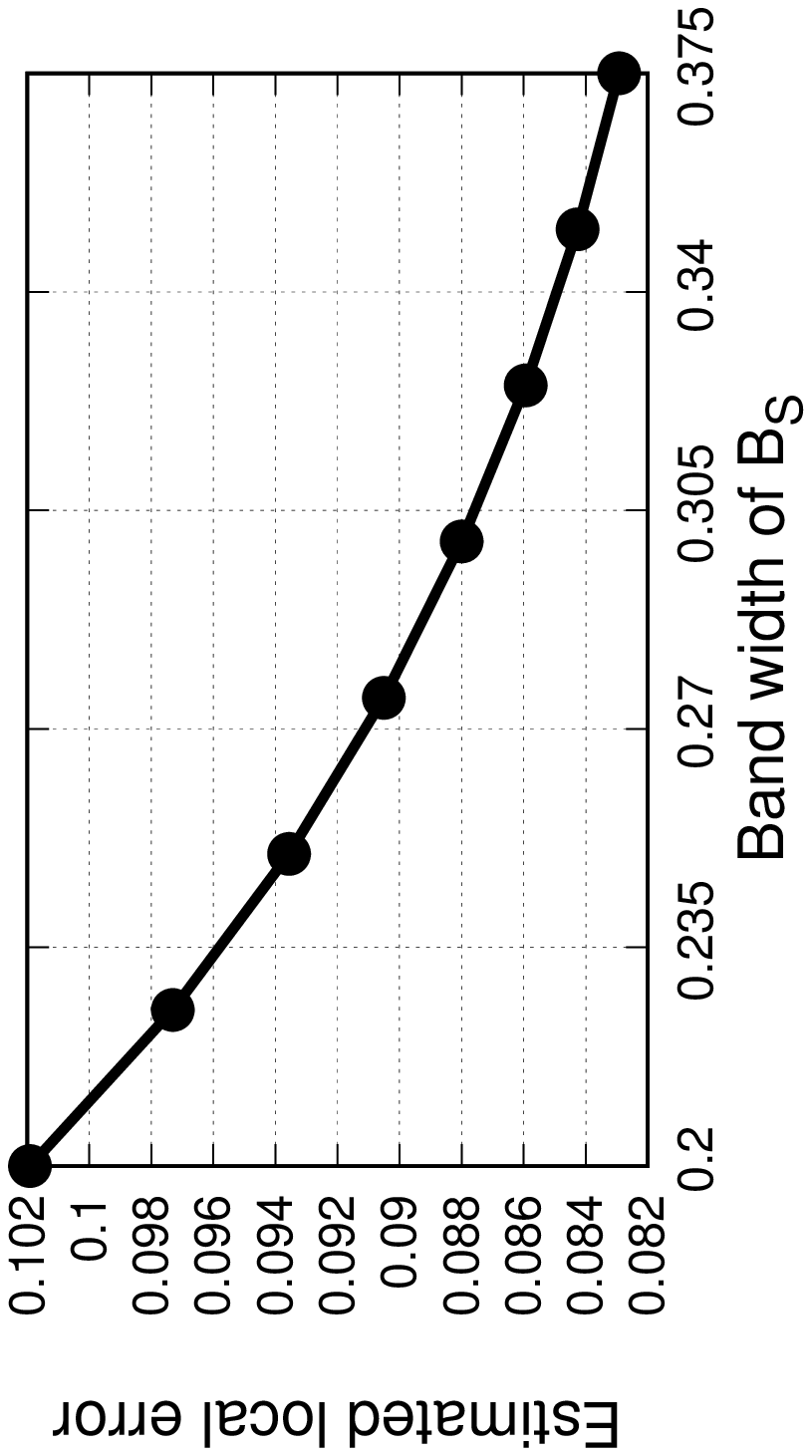} \quad
    	\includegraphics[width=3cm,angle=270]{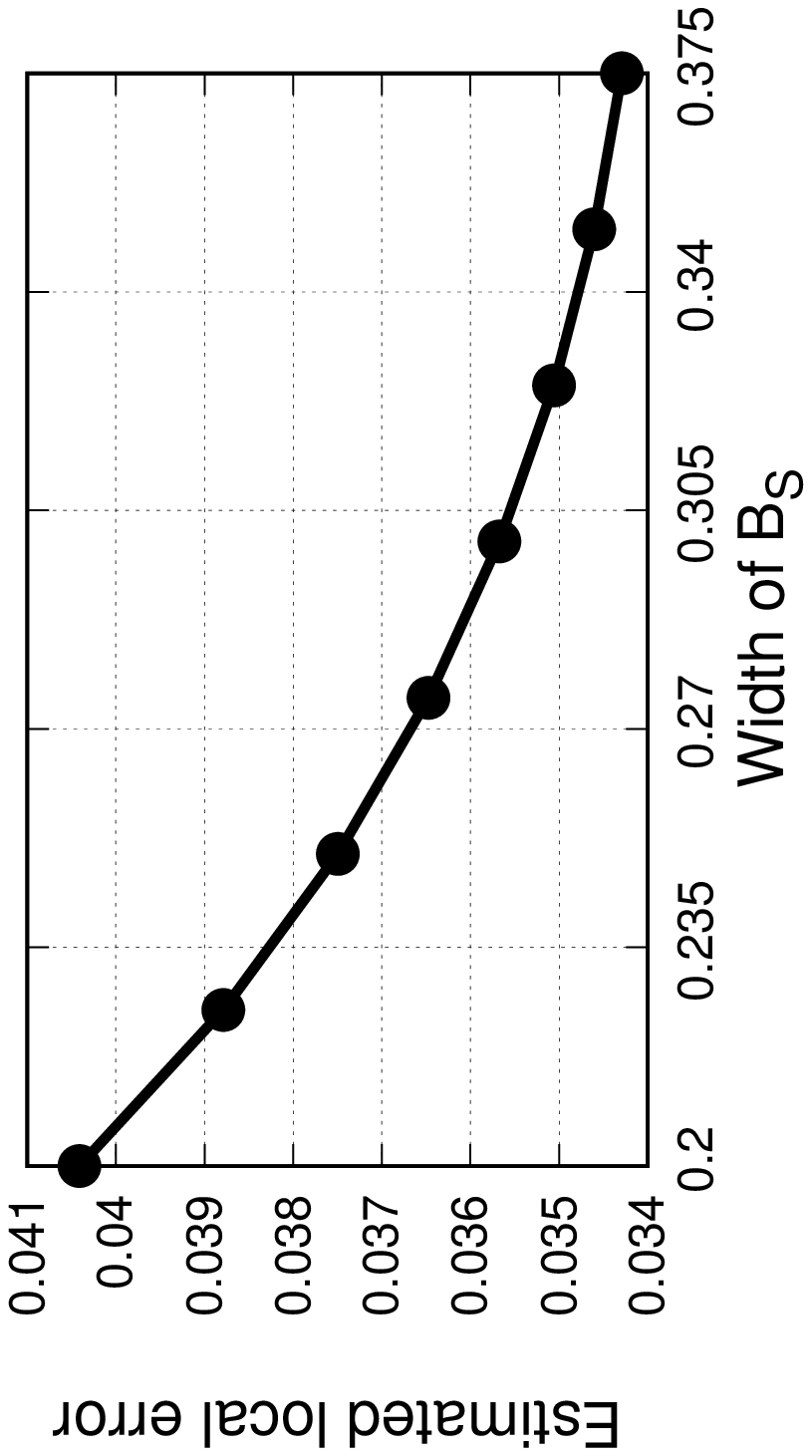}\\
	{(a) $h=1/64$} \hspace{100pt}  {(b) $h=1/128$} 
    	\caption{The relation between band width of $B_{S}$ and error estimation}
    	\label{fig:RD-tol-L}
    	\end{center}
      \end{figure}

 メッシュサイズ$h$と$\overline{E}_{L}$の関係をTabel~\ref{table:result_R}, ~Fig.\ref{fig:Errest_L}-(a),(b)に示す．
  \begin{table}[h]
    \caption{The relation between mesh size $h$ and error estimation}
    \begin{center}
     \begin{tabular}{|c|cc|c|ccc|c|}
      \hline
      \rule[-0.05cm]{0pt}{0.4cm}{} 
      $h$     & $\kappa_h$ & $C(h)$ &  $\overline{E}_{L}$ & $Err^{(1)}$     & $Err^{(2)}$    & $Err^{(3)}$     & $\overline{E}_{G}$  \\  \hline \hline
      1/8    & 0.073 & 0.083  &  {1.411} & 0.494 & 1.084 & 0.330  & 1.800      \\ 
      1/16  & 0.046 & 0.050 & {0.542}  & 0.177 & 0.425 & 0.188   & 0.869      \\ 
      1/32  & 0.028 & 0.029 & {0.208}  & 0.063  & 0.165 & 0.088  & 0.427      \\ 
      1/64  & 0.017 & 0.018 & {0.083}  & 0.022 & 0.064& 0.043    & 0.211      \\ 
      1/128 &0.011 & 0.011 & {0.034} & 0.008   & 0.025 & 0.021   & 0.105      \\ \hline    
      \end{tabular}
      \label{table:result_R}
     \end{center}
  \end{table}
     \begin{figure}[h!]
     	\begin{center}
	\includegraphics[angle=0,width=5.5cm]{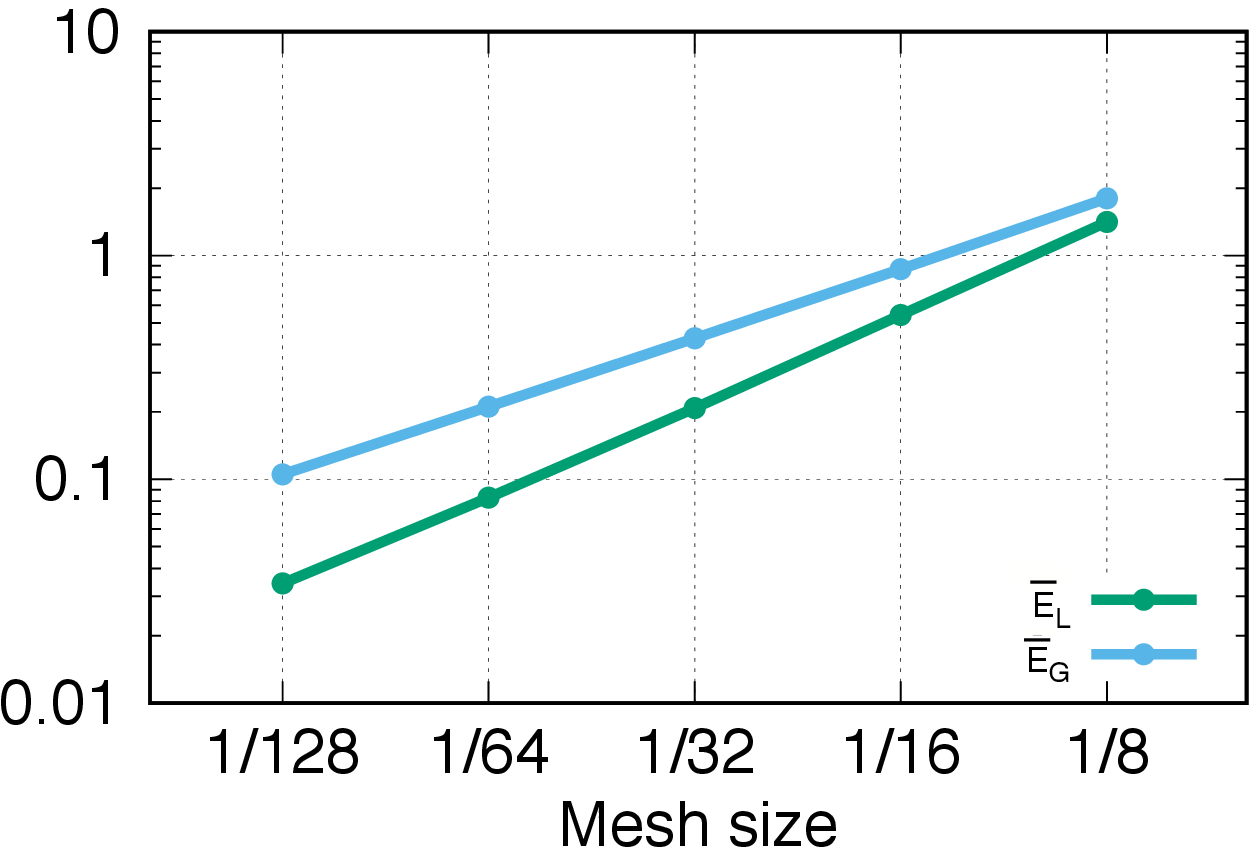} \hspace{20pt}
    	\includegraphics[angle=0,width=5.5cm]{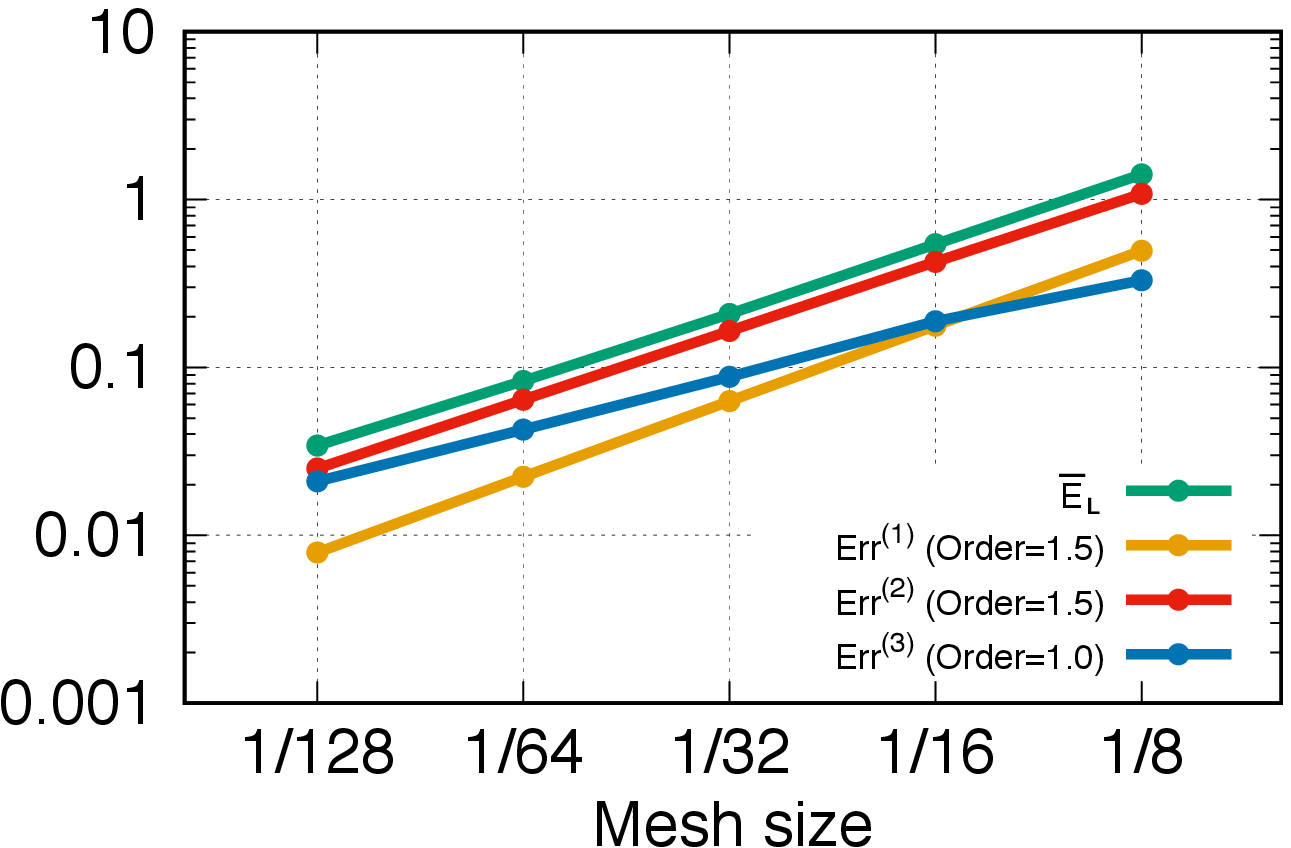} \\ 
	{\hspace{20pt} (a) Comparison of error estimations} \hspace{30pt}  {(b) Components of local error estimation} 
    	\caption{Numerical results}
    	\label{fig:Errest_L}
    	\end{center}
      \end{figure}
 
 \newpage
 \subsection{抵抗率測定法への応用}
  提案する誤差評価手法の応用例として，前述の四探針法への応用を紹介する．
  
  まず，抵抗率測定法を支配する数理モデルを準備する．
  測定試料の占める$\mathbb{R}^3$の領域を$\Omega$で表し，抵抗率測定に用いられる4本の探針$A$, $B$, $C$, $D$の位置を，それぞれ$P_A$, $P_B$, $P_C$, $P_D$で表す．
  さらに，探針$A$および$D$の接触部分を，それぞれ$\Gamma_A$, $\Gamma_D$ ($\subset \partial \Omega$)と表す．
  このとき，試料内部の電位$u$を支配する数理モデルの一つとして，以下の偏微分方程式の境界値問題が考えられる．
   \begin{eqnarray}
   -\Delta u = 0  \mbox{ in } \Omega ,~  \displaystyle \frac{\partial u}{\partial \mathbf{n}} = 0 \mbox{ on } \partial \Omega \setminus (\Gamma_A \cup \Gamma_D),~  \frac{\partial u }{\partial \mathbf{n}} = -\frac{1}{|\Gamma_A |} \mbox{ on } \Gamma_A, ~  \frac{\partial u }{\partial \mathbf{n}} = \frac{1}{|\Gamma_D |} \mbox{ on } \Gamma_D
   \label{eq:resist_model}
  \end{eqnarray}
  ここで，$|\Gamma_A|,|\Gamma_D|$は，それぞれ探針$A,D$の接触面積を表す． このとき、式(\ref{eq:v0_new})で定義される関数空間$V_0$を使用する．抵抗率測定法の問題(\ref{eq:resist_model})で関心が持たれるのは，探針$B$および$C$における電位である．
  また、有限要素法の計算では、２次元要素ではなく、四面体の分割における３次元有限要素を使用する．
  
  数値実験のために$\Omega,~P_A,~P_B,~P_C,~P_D,~\Gamma_A,~\Gamma_D$および部分領域$S$について，以下で設定する(Fig.\ref{fig:Sample}-(a)参照).
  \begin{enumerate}
  \item[(a)] 領域$\Omega$:~ $(0,8) \times (0,5) \times (0,0.2)$ \quad (直方体領域)
  \item[(b)] 探針位置:$$P_A=(3.25,2.5,0.2),~P_B=(3.75,2.5,0.2),~P_C=(4.25,2.5,0.2),~P_D=(4.75,2.5,0.2)$$
  \item[(c)] 接触部分$\Gamma_A,~\Gamma_D$:~ $\Gamma_A = B(P_A; 0.02) ,~\Gamma_D = B(P_D; 0.02)$
  \item[(d)] 関心のある部分領域$S$:~ $(3.5,4.5) \times (2.25,2.75) \times (0.0,0.2)$
  \end{enumerate}   
  ただし，$B(P;r)$は試料の表面における中心$P$，半径$r$の円である．  
  
  次に，問題(\ref{eq:resist_model})の局所事後誤差評価について検討する．
  問題(\ref{eq:resist_model})では，境界条件が区分的な定数関数によって与えられるので，定理\ref{Thm:local_est}の仮定を満足する．
  さらに，$f =0$であるため当該誤差評価式(\ref{eq:local_est})は，以下に示す簡便な式となる．
  $$\overline{E}_{L}  \leq \sqrt{  ||\nabla u_ h- \mathbf{p}_h||_\alpha^2 +2 C(h) ||\nabla \alpha||_{L^\infty(\Omega')} \cdot  ||\nabla u_h -\mathbf{p}_h||_{\Omega}^2 }$$
  
  この例では帯状領域$B_S$の幅を0.2とし，有限要素空間として2次の適合有限要素空間および1次Raviart-Thomas有限要素空間が用いられる.
  さらに$X^h$を区分的な1次多項式の関数空間とする．
  この場合，射影作用素$\pi_h$の誤差評価式\ref{eq:inter_err}については，より良い定数が利用できるが，この例では引き続き$C_0$を利用する．
  局所事後誤差評価の条件Table.\ref{table:prepair_resist}に，計算に用いたメッシュをFig.\ref{fig:Sample}-(b)に，計算結果をTable.\ref{table:reslut_resist}に示す．
 
   \begin{table}[h]
    \caption{The condition of local error estimation}
    \begin{center}
     \begin{tabular}{|cc|ccc|}
      \hline
      \rule[-0.05cm]{0pt}{0.4cm}{} 
         Size of $A_M$ & Size of $A_C$     & $\kappa_h$    & $C_0 h$      & $C(h)$  \\  \hline \hline
         1794250 & 234571 & 0.044  & 0.053   & 0.069     \\ \hline
      \end{tabular}
      \label{table:prepair_resist}
     \end{center}
    \end{table}

  \begin{table}[h]
    \caption{Local error estimation}
    \begin{center}
     \begin{tabular}{|c|cc|c|}
      \hline
      \rule[-0.05cm]{0pt}{0.4cm}{} 
        $\overline{E}_{L}$     & $Err^{(2)}$    & $Err^{(3)}$      & $\overline{E}_{G}$  \\  \hline \hline
       0.284 & 0.080  & 0.012   & 0.341     \\ \hline
      \end{tabular}
      \label{table:reslut_resist}
     \end{center}
    \end{table}
    ただし$A_M,~A_C$は，それぞれ混合有限要素法，適合有限要素法の全体剛性行列である．
     \begin{figure}[h!]
     	\begin{center}
	  \hspace{40pt} \includegraphics[width=4.5cm]{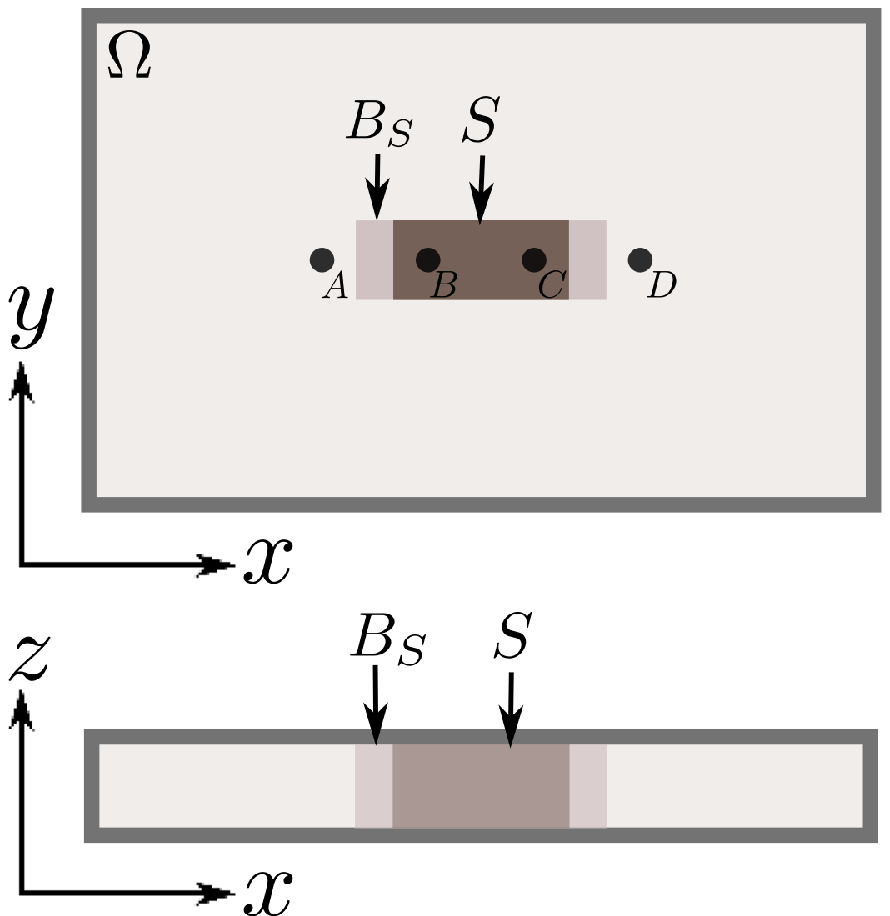} \hspace{30pt}
    	\includegraphics[width=5cm]{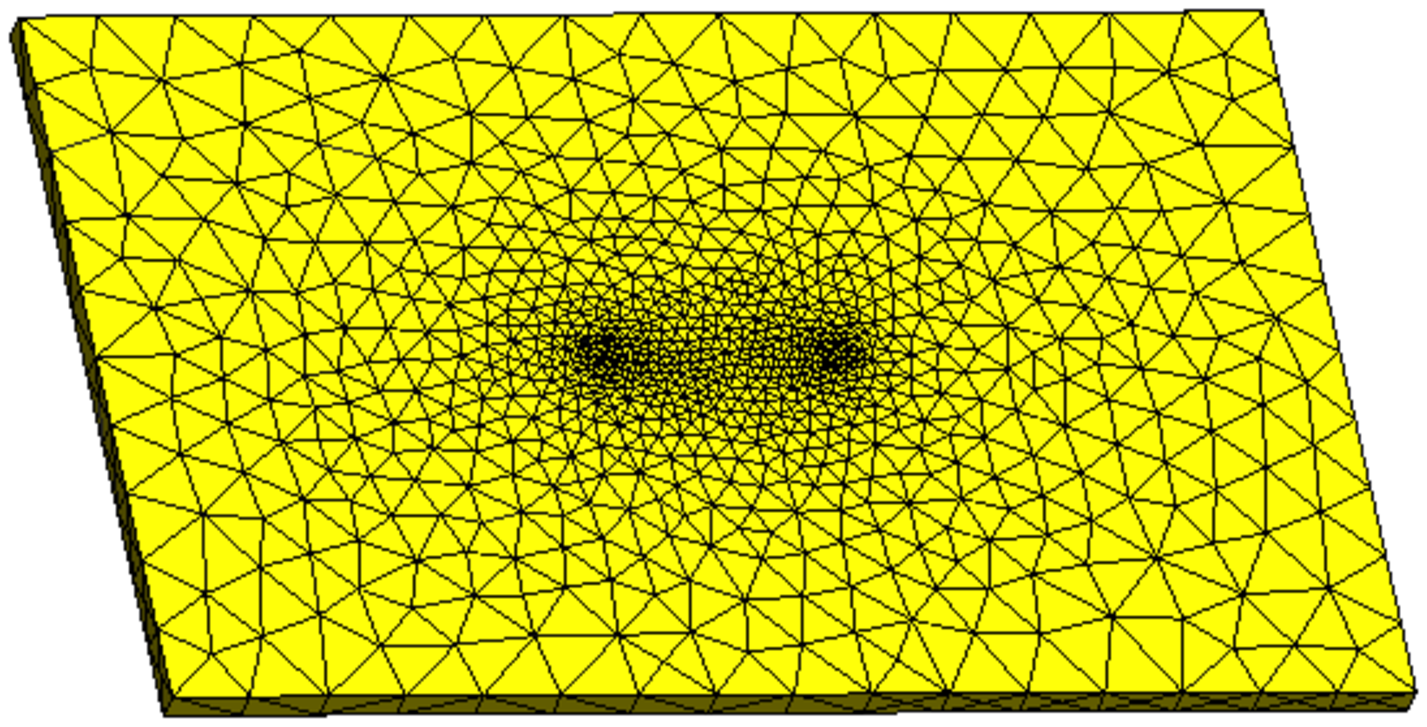} \\ 
	{(a) Problem settings of resistivity measurement} \hspace{40pt}  {(b) Tetrahedralization of $\Omega$} 
    	 \caption{Problem settings of resistivity measurement}
	 \label{fig:Sample}
    	\end{center}
      \end{figure}

\section{結論}
本論文では有限要素解の定量的な局所事後誤差評価について検討した．
正方形領域とL字型領域の上で定義されたPoisson方程式の境界値問題に対して数値実験を行い，提案手法の有効性について確認できた．
さらに，抵抗率測定法に関する数値例を示した．抵抗率測定法に使用する$|u(P_B)-u(P_C)|$の誤差評価については，今後の研究で検討する予定である．

\end{document}